\newtheorem{thm}{Theorem}[section]
\newtheorem{lem}[thm]{Lemma}
\newtheorem{prop}[thm]{Proposition}
\theoremstyle{definition}
\theoremstyle{remark}
\theoremstyle{remark}
\numberwithin{equation}{section}
\newcommand{\n}[1]{\left\vert#1\right\vert}
\newcommand{\nm}[1]{\left\Vert#1\right\Vert}
\begin{document}

\title[]{\small\sf A non-integrated hypersurface defect relation for meromorphic maps over complete K\"{a}hler manifolds into projective algebraic varieties}

\author[]{\tt Wei Chen$^{\dagger}$ and Qi Han$^{\ddagger}$}

\address{$^{\dagger}$ {\sf Department of Mathematics, Shandong University, Jinan, Shandong 250100, P.R. China}
\vskip 2pt $^{\ddagger}$ {\sf Department of Mathematics, Texas A\&M University at San Antonio, San Antonio, Texas 78224, USA}
\vskip 2pt \hspace{1.5mm} {\rm Email: {\tt weichensdu@126.com} (W. Chen) and {\tt qhan@tamusa.edu} (Q. Han)}}
\thanks{{\sf 2010 Mathematics Subject Classification.} 32H30, 32H04, 32H25, 32A22.}
\thanks{{\sf Keywords.} K\"{a}hler manifold, projective algebraic variety, meromorphic map, Nevanlinna theory, hypersurface, $k$-subgeneral position, non-integrated defect relation.}

\begin{abstract}
  In this paper, a non-integrated defect relation for meromorphic maps from complete K\"{a}hler manifolds $M$ into smooth projective algebraic varieties $V$ intersecting hypersurfaces located in $k$-subgeneral position (see \eqref{e1.5} below) is proved.
  The novelty of this result lies in that both the upper bound and the truncation level of our defect relation depend only on $k$, $\dim_{\,\mathbb{C}}(V)$ and the degrees of the hypersurfaces considered; besides, this defect relation recovers Hirotaka Fujimoto \cite[Theorem 1.1]{Fu3} when subjected to the same condition.
\end{abstract}

\maketitle

\section{Introduction}\label{Int} 
\noindent Fujimoto \cite{Fu1,Fu2,Fu3,Fu5} introduced the innovative notion of non-integrated, or modified, defect for meromorphic maps over a complex K\"{a}hler manifold into the complex projective space.
Recent extensions and generalizations may be found in Ru and Sogome \cite{RS1}, as well as Tan and Truong \cite{TT}.
Below we will replicate the essential elements in this aspect from those references.

Denote $M$ an $m$-dimensional K\"{a}hler manifold with K\"{a}hler form $\boldsymbol{\omega}=\frac{\sqrt{-1}}{2}\sum_{i,j}h_{i\bar{j}}\,dz_i\wedge d{\bar{z}_j}$.
Write $\operatorname{Ric}\boldsymbol{\omega}=dd^c\log\left(\det(h_{i\bar{j}})\right)$ with $d=\partial+\bar{\partial}$ and $d^c=\frac{\sqrt{-1}}{4\pi}(\bar{\partial}-\partial)$.
Let $f:M\to\mathbb{P}^n(\mathbb{C})$ be a meromorphic map, and let $D$ be a hypersurface in $\mathbb{P}^n(\mathbb{C})$ of degree $d$ with $f(M)\nsubseteq D$.
Take $\nu^f_D$ to be the intersection divisor generated through $f$ and $D$, and take $\mu_0>0$ to be an integer.
Denote $\mathcal{A}\left(D,\mu_0\right)$ the family of constants $\eta\geq0$ such that there exists a bounded, nonnegative, continuous function $\mathtt{h}$ on $M$, with zeros of order no less than $\min\left\{\nu^f_D,\mu_0\right\}$, satisfying
\begin{equation}\label{e1.1}
d\,\eta\,\Omega_f+dd^c\log\mathtt{h}^2\geq\left[\min\left\{\nu^f_D,\mu_0\right\}\right].
\end{equation}
Here, $\Omega_f$ denotes the pull-back of the normalized Fubini-Study metric form on $\mathbb{P}^n(\mathbb{C})$, and $\left[\nu\right]$ denotes the $\left(1,1\right)$-current associated with the divisor $\nu\geq0$.

Note condition \eqref{e1.1} says that for each nonzero holomorphic function $\psi$ on an open set $U$ of $M$ with $\nu^0_\psi=\min\left\{\nu^f_D,\mu_0\right\}$ outside an analytic subset of codimension at least $2$, the function $\mathtt{v}:=\log\left(\frac{\mathtt{h}^2\nm{\mathfrak{f}}^{2d\eta}}{\n{\psi}^2}\right)$ is continuous and pluri-subharmonic, where $\nm{\mathfrak{f}}^2=\sum_{\imath=0}^n\n{\mathtt{f}_\imath}^2$ for a (local) reduced representation $\mathfrak{f}=\left(\mathtt{f}_0,\mathtt{f}_1,\ldots,\mathtt{f}_n\right):M\to\mathbb{C}^{n+1}$ of $f=\left[\mathtt{f}_0:\mathtt{f}_1:\cdots:\mathtt{f}_n\right]$.

The \textsl{non-integrated defect} of $f$ regarding $D$, truncated at level $\mu_0$, is defined as
\begin{equation}\label{e1.2}
\delta_{\mu_0}^f(D)=1-\inf\left\{\eta\geq0:\eta\in\mathcal{A}\left(D,\mu_0\right)\right\}.
\end{equation}
Then, like Nevanlinna's or Stoll's classical defects, $0\leq\delta_{\mu_0+1}^f(D)\leq\delta_{\mu_0}^f(D)\leq1$, $\delta_{\mu_0}^f(D)=1$ if
$f(M)\cap D=\emptyset$, and $\delta_{\mu_0}^f(D)\geq1-\frac{\mu_0}{\mu}$ for any integer $\mu\geq\mu_0$ if $\left[\nu^f_D-\mu\right]\geq0$ on $f^{-1}(D)$.

Further, we say $f:M\to\mathbb{P}^n(\mathbb{C})$ satisfies the ``{\bf condition $\mathbf{C}(\rho)$}'' provided for some constant $\rho\geq0$, there is a bounded, nonnegative, continuous function $\mathtt{h}$ on $M$ such that
\begin{equation}\label{e1.3}
\rho\,\Omega_f+dd^c\log\mathtt{h}^2\geq\operatorname{Ric}\boldsymbol{\omega}.
\end{equation}

Now, the original result of Fujimoto \cite[Theorem 1.1]{Fu3} can be stated as follows.

\begin{thm}\label{t1.1}
Assume $M$ is an $m$-dimensional complete K\"{a}hler manifold such that the universal covering of $M$ is biholomorphically isomorphic to a ball in $\mathbb{C}^m$.
Let $f:M\to\mathbb{P}^n(\mathbb{C})$ be a linearly non-degenerate meromorphic map such that the {\bf condition $\mathbf{C}(\rho)$} is satisfied, and let $H_1,H_2,\ldots,H_q$ be $q\left(\geq n+1\right)$ hyperplanes in $\mathbb{P}^n(\mathbb{C})$ that are located in general position.
Then, one has the following defect relation
\begin{equation}\label{e1.4}
\sum_{j=1}^q\delta_n^f(H_j)\leq n+1+\rho\,n\left(n+1\right).
\end{equation}
\end{thm}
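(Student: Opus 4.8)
The plan is to argue by contradiction using the curvature / generalized Schwarz lemma method characteristic of \emph{non-integrated} defect relations (as opposed to the averaging, integrated method). Suppose \eqref{e1.4} fails, so that $\sum_{j=1}^q\delta_n^f(H_j)>n+1+\rho\,n(n+1)$. By the definition \eqref{e1.2} of the truncated defect, choose for each $j$ a constant $\eta_j\in\mathcal{A}(H_j,n)$, with an associated bounded nonnegative continuous function $\mathtt{h}_j$ realizing \eqref{e1.1} (with $d=\deg H_j=1$), so close to $1-\delta_n^f(H_j)$ that $\sum_{j=1}^q\eta_j<q-(n+1)-\rho\,n(n+1)$; take also $\mathtt{h}_0$ from condition $\mathbf{C}(\rho)$ in \eqref{e1.3}. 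Since $\pi:\tilde{M}\to M$ is a biholomorphism onto a ball $B\subseteq\mathbb{C}^m$, lift $f$ to $\tilde{f}=f\circ\pi$, which remains linearly non-degenerate; completeness of $\boldsymbol{\omega}$, condition $\mathbf{C}(\rho)$, and the inequalities \eqref{e1.1} all pull back to $B$, so the entire argument takes place on the ball.

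Fix a reduced representation $\tilde{\mathfrak f}=(\tilde{\mathtt f}_0,\ldots,\tilde{\mathtt f}_n)$ on $B$ and linear forms $L_1,\ldots,L_q$ defining $H_1,\ldots,H_q$. Linear non-degeneracy guarantees that the generalized Wronskian $W$ of $\tilde{\mathtt f}_0,\ldots,\tilde{\mathtt f}_n$ (an $(n+1)\times(n+1)$ determinant formed from a generic frame of holomorphic derivations on $B$) is not identically zero. Build the singular pseudo-length element
\[
d\tau^2 = \bigg(\mathtt{h}_0^{2\rho n(n+1)}\prod_{j=1}^{q}\mathtt{h}_j^{2}\cdot\frac{\nm{\tilde{\mathfrak f}}^{2\sigma}\,\n{W}^2}{\prod_{j=1}^{q}\n{L_j(\tilde{\mathfrak f})}^2\bigl(\log\bigl(C/\n{L_j(\tilde{\mathfrak f})}^2\bigr)\bigr)^2}\bigg)^{1/p}\,\n{dz}^2,
\]
with exponents $\sigma,p$ chosen so that the numerator exactly cancels the projective ambiguity and so that the constant $q-(n+1)-\rho\,n(n+1)$ governs the sign. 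The logarithmic factors are the standard device keeping the total volume finite while preserving the curvature sign, and the general position of the $H_j$ enters through the elementary fact that at most $n$ of the $\n{L_j(\tilde{\mathfrak f})}$ can be simultaneously small.

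Next I would compute $dd^c\log$ of the density of $d\tau^2$. The poles of the factors $1/\n{L_j(\tilde{\mathfrak f})}^2$ are absorbed by the zeros encoded in the $\mathtt{h}_j$ via \eqref{e1.1} truncated at level $n$ (whose truncation matches the order-$n$ ramification information carried by $W$); the term $\operatorname{Ric}\tilde{\boldsymbol{\omega}}$ arising from the Wronskian is converted into a multiple of $\Omega_f$ by condition $\mathbf{C}(\rho)$; and the log-log terms contribute the positive multiple of $\Omega_f$ with the sharp constant. This exhibits $d\tau^2$ as a continuous pseudo-metric whose (Ricci) curvature is bounded above by a negative constant. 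Invoking the generalized Schwarz--Ahlfors lemma on $B$ together with completeness of the lifted K\"ahler metric, $d\tau^2$ is dominated by the Poincar\'e metric of $B$; pushing this bound back down to $M$ and comparing total volumes contradicts the strict inequality $\sum_{j=1}^q\eta_j<q-(n+1)-\rho\,n(n+1)$, which had forced $d\tau^2$ to carry positive volume. This contradiction yields \eqref{e1.4}.

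I expect the heart of the difficulty to be the construction and curvature estimate above. Unlike the one-dimensional domain case, $W$ must be replaced by a determinant over a generic frame of holomorphic vector fields on $B$, and its zeros together with the contribution of $\operatorname{Ric}\tilde{\boldsymbol{\omega}}$ must be controlled uniformly; the bookkeeping that \emph{simultaneously} keeps the total volume finite (via the log-log factors and general position) and makes the curvature constant emerge exactly as $n+1+\rho\,n(n+1)$ is the crux of the argument.
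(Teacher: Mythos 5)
Your proposal has a genuine gap at its central step. Within this paper, Theorem \ref{t1.1} is obtained as the special case $d=1$, $k=n=N$, $V=\mathbb{P}^n(\mathbb{C})$ of Theorem \ref{t1.2} (then $H_V(d)=n+1$, the Nochka weights $\omega_j$ all equal $1$, and \eqref{e1.6} becomes \eqref{e1.4}), and the proof of Theorem \ref{t1.2} -- following Fujimoto's own method in \cite{Fu3} -- is \emph{not} a curvature argument. It runs: lift to $M=B(1)$, split into the two growth cases \eqref{e3.2}; in the slow-growth case, assume the defect inequality fails, build from the Wronskian estimates \eqref{e3.7}--\eqref{e3.8} and the defect data a continuous \emph{plurisubharmonic} function $\theta$ on $B(1)$ with $\int_M e^{\theta}\,dV<\infty$ (the finiteness in \eqref{e3.11}--\eqref{e3.12} uses precisely the hypothesis $\limsup T_f/\log\frac{1}{1-r}<\infty$), and contradict the Yau--Karp theorem that no such integrable object can exist on a complete K\"ahler manifold of infinite volume; the fast-growth case is handled separately by the integrated second main theorem (Proposition \ref{p3.2}) together with \eqref{e2.4}. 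You instead propose the negatively-curved pseudo-metric plus Ahlfors--Schwarz route, which is the technique of Fujimoto's Gauss-map papers \cite{Fu1,Fu2,Fu5}, where the domain is one-dimensional. The step ``this exhibits $d\tau^2$ as a continuous pseudo-metric whose curvature is bounded above by a negative constant; invoking the generalized Schwarz--Ahlfors lemma\dots'' is exactly what is not available here: for $m>1$ the generalized Wronskian $\det\left(D^{\alpha^l}\tilde{\mathtt{f}}_\ell\right)$ is not an invariantly defined tensor, a line element $\lambda\,\n{dz}^2$ built from it on a ball in $\mathbb{C}^m$ carries no established holomorphic-sectional or Ricci curvature bound, and no Schwarz-type lemma applies to it. This obstruction is the very reason Fujimoto replaced the curvature method by the plurisubharmonic/Yau--Karp method in \cite{Fu3}.

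Two further points would still block your argument even if a curvature estimate were granted. First, your terminal contradiction (``pushing this bound back down to $M$ and comparing total volumes'') is not a valid mechanism: domination of $d\tau^2$ by the Poincar\'e metric of $B$ says nothing about the complete K\"ahler metric $\boldsymbol{\omega}$ of $M$, since $d\tau^2$ was never compared with $\boldsymbol{\omega}$; in the paper the link to completeness is made only through the {\bf condition $\mathbf{C}(\rho)$}, which converts $\operatorname{Ric}\boldsymbol{\omega}$ into the psh function $\vartheta_2$ with $e^{\vartheta_2}dV\leq\nm{\mathfrak{f}}^{2\rho}\upsilon_m$, and the contradiction is integrability against infinite volume (Yau, Karp), not metric domination. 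Second, you make no case distinction on the growth of $T_f$: any argument of this type needs the dichotomy \eqref{e3.2}, because the finiteness of the relevant integrals fails in the fast-growth case, which must instead be treated by the classical (integrated) defect relation via \eqref{e2.4} and \eqref{e3.16}.
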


Ru and Sogome \cite{RS1} (see also Yan \cite{Ya}), and Tan and Truong \cite{TT} generalized independently the preceding Theorem \ref{t1.1} in the way that $\mathbb{P}^n(\mathbb{C})$ is replaced by a projective algebraic variety $V\subseteq\mathbb{P}^N(\mathbb{C})$ and hyperplanes in $\mathbb{P}^n(\mathbb{C})$ located in general position are extended to hypersurfaces in $\mathbb{P}^N(\mathbb{C})$ located in different types of $k$-subgeneral positions.
One recalls that the $k$-subgeneral position condition used in \cite{TT} comes from Dethloff, Tan and Thai \cite[Definition 1.1]{DTT}.

It is noteworthy that both the upper bounds and the truncation levels of the defect relations obtained in \cite[Theorem 1.1]{RS1}, \cite[Definition 1.1 and Theorem 1.2]{TT} and \cite[Definition 1.2 and Theorem 1.1]{Ya} depend on a given constant $\epsilon>0$, and both blow up to $+\infty$ as $\epsilon\to0$.
Also, it's not clear to us if those results can recover Theorem \ref{t1.1} under the same assumptions.

In the sequel, assume that $V\subseteq\mathbb{P}^N(\mathbb{C})$ is a smooth projective algebraic variety of dimension $n\left(\leq N\right)$.
$q\left(>k\right)$ hypersurfaces $D_1,D_2,\ldots,D_q$ in $\mathbb{P}^N(\mathbb{C})$ are said to be located in \textsl{$k$-subgeneral position} $\left(k\geq n\right)$ with respect to $V$ provided for every $1\leq j_0<j_1<\cdots<j_k\leq q$,
\begin{equation}\label{e1.5}
\left(\bigcap_{s=0}^k\,\operatorname{supp}\,(D_{j_s})\right)\cap V=\emptyset
\footnote{As far as we can check, this condition \eqref{e1.5} appeared first in Chen, Ru and Yan \cite{CRY}.}.
\end{equation}
Here, $\operatorname{supp}\,(D)$ is the support of the divisor $D$.
One says $D_1,D_2,\ldots,D_q$ are in general position with respect to $V$, if they are located in $n$-subgeneral position with respect to $V$.

The purpose of this paper is by combining the techniques used in \cite{TT} and \cite{Ya} to describe a hypersurface defect relation, with definite truncation level and explicit upper bound, that will be exactly Fujimoto's original Theorem \ref{t1.1} when $d=1$, $k=n=N$ and $V=\mathbb{P}^n(\mathbb{C})$.

Fix an integer $d\geq1$.
Write $\mathcal{H}_d$ the vector space of homogeneous polynomials of degree $d$ in $\mathbb{C}\left[w_0,w_1,\ldots,w_N\right]$ and $\mathcal{I}_V$ the prime ideal in $\mathbb{C}\left[w_0,w_1,\ldots,w_N\right]$ defining $V$.
Denote
\begin{equation}
H_V(d):=\dim_{\,\mathbb{C}}\left(\frac{\mathcal{H}_d}{\mathcal{H}_d\cap\mathcal{I}_V}\right)\nonumber
\end{equation}
to be the Hilbert function of $V$.
$H_V(d)=n+1$ when $d=1$, $n=N$ and $V=\mathbb{P}^n(\mathbb{C})$.

Finally, we can formulate our main theorem of this paper as the following result.

\begin{thm}\label{t1.2}
Suppose $M$ is an $m$-dimensional complete K\"{a}hler manifold such that the universal covering of $M$ is biholomorphically isomorphic to a ball in $\mathbb{C}^m$, and assume $V\subseteq\mathbb{P}^N(\mathbb{C})$ is an irreducible projective algebraic variety of dimension $n\left(\leq N\right)$.
Let $f:M\to V$ be an algebraically non-degenerate meromorphic map such that the {\bf condition $\mathbf{C}(\rho)$} is satisfied, and let $D_1,D_2,\ldots,D_q$ be $q\left(\geq k+1\right)$ hypersurfaces in $\mathbb{P}^N(\mathbb{C})$ that are located in $k$-subgeneral position $\left(k\geq n\right)$ in regard to $V$ having degrees $d_1,d_2,\ldots,d_q$ respectively.
Denote by $d$ the least common multiple of $d_1,d_2,\ldots,d_q$.
Then, one has the following defect relation
\begin{equation}\label{e1.6}
\sum_{j=1}^q\delta_{H_V(d)-1}^f(D_j)\leq\frac{2k-n+1}{n+1}\left\{H_V(d)+\frac{\rho}{d}\,H_V(d)\left(H_V(d)-1\right)\right\}.
\end{equation}
\end{thm}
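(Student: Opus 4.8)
The plan is to convert the hypersurface problem into a hyperplane problem through the degree-$d$ Hilbert embedding, to install Nochka weights so as to pass from $k$-subgeneral position to a genuine general-position estimate with no auxiliary $\epsilon$, and then to run Fujimoto's differential-geometric argument built on the ball-covering hypothesis. First I would raise each $D_j$ to the power $d/d_j$, so that all the hypersurfaces acquire the common degree $d$; if $Q_j$ is a defining form of $D_j$, then $Q_j^{d/d_j}$ is a degree-$d$ form and $f(M)\nsubseteq D_j$ guarantees $Q_j^{d/d_j}\notin\mathcal{H}_d\cap\mathcal{I}_V$. Fixing a $\mathbb{C}$-basis of $\mathcal{H}_d/(\mathcal{H}_d\cap\mathcal{I}_V)$ defines the Hilbert embedding $\Phi:V\hookrightarrow\mathbb{P}^{H_V(d)-1}(\mathbb{C})$, and the algebraic non-degeneracy of $f$ makes $F:=\Phi\circ f$ a \emph{linearly} non-degenerate meromorphic map into $\mathbb{P}^{H_V(d)-1}(\mathbb{C})$. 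Each $Q_j^{d/d_j}$ becomes a hyperplane $\tilde H_j$ with $\nu^F_{\tilde H_j}=(d/d_j)\,\nu^f_{D_j}$, while the embedding scales the metric forms by $\Omega_F=d\,\Omega_f$. Tracking these scalings through \eqref{e1.1} and \eqref{e1.3} is what turns $\delta^f_{H_V(d)-1}(D_j)$ into a hyperplane defect of $F$ and replaces $\rho$ by $\rho/d$ in condition $\mathbf{C}(\rho)$; this is the origin both of the factor $\frac{\rho}{d}$ in \eqref{e1.6} and of the truncation level $H_V(d)-1$, the latter being the maximal vanishing order of a Wronskian of the $H_V(d)$ coordinates of $F$ along any $\tilde H_j$.

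The $k$-subgeneral position \eqref{e1.5} of the $D_j$ with respect to $V$ translates verbatim into the statement that any $k+1$ of the hyperplanes $\tilde H_j$ miss $\Phi(V)$, i.e.\ that the $\tilde H_j$ are in $k$-subgeneral position with respect to the $n$-dimensional variety $\Phi(V)$. I would then invoke Nochka's weight construction in its variety form: there exist rational weights $\omega_1,\dots,\omega_q\in(0,1]$ and a Nochka constant $\tilde\omega\geq\frac{n+1}{2k-n+1}$ such that every admissible subsum of the $\omega_j$ is bounded by the dimension of the corresponding span and such that $\sum_{j=1}^q\omega_j=\tilde\omega(q-2k+n-1)+(n+1)$. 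It is the appearance of $n=\dim_{\,\mathbb{C}}(V)$, rather than the ambient $H_V(d)-1$, in this constant that produces the sharp coefficient $\frac{2k-n+1}{n+1}$, and the exactness of the identity is precisely what lets me dispense with the $\epsilon$ burdening \cite{RS1,TT,Ya}.

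For $\eta_j\in\mathcal{A}(\tilde H_j,H_V(d)-1)$ I take the bounded continuous functions $\mathtt{h}_j$ furnished by \eqref{e1.1} and assemble the Nochka-weighted product $\prod_j\mathtt{h}_j^{\omega_j}$, together with a Wronskian of a reduced representation of $F$, into a single nonnegative function. Its associated $(1,1)$-current is, on one side, dominated by $\Omega_F$, $\operatorname{Ric}\boldsymbol{\omega}$ and the weighted divisor data through the Nochka subsum bounds combined with \eqref{e1.3}; on the other side it controls from below a pseudo-metric of strictly negative curvature. Completeness of $M$ and the hypothesis that its universal cover is a ball in $\mathbb{C}^m$ then permit the application of Fujimoto's generalized Schwarz--Ahlfors lemma, exactly as in the proof of Theorem~\ref{t1.1}, to force an upper bound on $\sum_j\omega_j\bigl(1-\eta_j\bigr)$. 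Feeding in the Nochka identity, undoing the degree normalization, and passing to the infimum over admissible $\eta_j$ then converts this bound into \eqref{e1.6}.

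The delicate point is this last step: one must match the Nochka-weighted vanishing of $\prod_j\mathtt{h}_j^{\omega_j}$ against the vanishing order of the Wronskian \emph{at the truncation level} $H_V(d)-1$, and verify that the resulting pseudo-metric is simultaneously negatively curved and complete on the relevant locus, so that the Schwarz-type lemma applies with no $\epsilon$-loss. Arranging the variety-level subgeneral position of the $\tilde H_j$ so that the Nochka machinery is fed with $\dim_{\,\mathbb{C}}(V)=n$---rather than the much larger ambient dimension---entering the constant is the conceptual crux upon which the sharpness of \eqref{e1.6} rests.
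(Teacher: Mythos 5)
Your combinatorial skeleton coincides with the paper's: normalize to common degree $d$, use a basis of $\mathcal{H}_d/(\mathcal{H}_d\cap\mathcal{I}_V)$ to produce the linearly non-degenerate map $F$ into $\mathbb{P}^{H_V(d)-1}(\mathbb{C})$, and install the variety-form Nochka weights of Proposition \ref{p2.1}, whose identity $\sum_j\omega_j=\varpi\left(q-2k+n-1\right)+n+1$ and lower bound $\varpi\geq\frac{n+1}{2k-n+1}$ in \eqref{e2.5} are indeed the source of the coefficient $\frac{2k-n+1}{n+1}$. But the analytic core of your proposal is a genuine gap. Fujimoto's proof of Theorem \ref{t1.1} --- and the paper's proof of Theorem \ref{t1.2} --- is \emph{not} a Schwarz--Ahlfors/negative-curvature argument; that mechanism belongs to the Gauss-map setting, where completeness of the induced metric on the minimal surface supplies the contradiction, and you never specify how completeness of the given K\"ahler metric on an $m$-dimensional ball would enter a curvature comparison. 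The actual mechanism is an integrability contradiction: assuming the weighted inequality \eqref{e3.9} fails, one chooses $\tilde{\eta}_j$ as in \eqref{e3.10} and assembles from the generalized Wronskian, the functions $\mathtt{u}_j$, and condition $\mathbf{C}(\rho)$ a plurisubharmonic function $\theta$ with $\int_Me^\theta\hspace{0.2mm}dV<\infty$; since $e^\theta$ is then a nonzero, nonnegative, subharmonic $L^1$ function on a complete K\"ahler manifold of infinite volume, this contradicts Yau \cite{Yst} and Karp \cite{Ka}. Making that integral finite consumes precisely the ingredients you defer as ``the delicate point'': the pointwise Nochka estimate \eqref{e3.7} and the divisor estimate \eqref{e3.8} (both of which use part {\bf(c.)} of Proposition \ref{p2.1} --- which your summary of the weights omits --- together with Lemma \ref{l2.2}), the Wronskian integral estimate \cite[Proposition 6.1]{Fu3} (or \cite[Proposition 3.3]{RS1}), and Hayman's growth lemma \cite[Lemma 2.4]{Hwk}. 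A related smaller omission: truncation does not commute with raising divisors to the power $d/d_j$, so identifying $\delta_{H_V(d)-1}^f(D_j)$ with a hyperplane defect of $F$ requires the one-sided inclusion $\mathcal{A}\left(D_j,\mu_0\right)\subseteq\mathcal{A}\bigl(D_j^{d/d_j},\mu_0\bigr)$, which fortunately goes in the useful direction but must be checked.

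The second gap is structural: any integrability (or Schwarz-type) contradiction of this kind only works in the growth regime $\limsup_{r\to1}T_f(r,r_0)/\log\frac{1}{1-r}<\infty$, because the passage from \eqref{e3.11} to \eqref{e3.12} needs $T_F(R,r_0)$ to be controlled by $\log\frac{1}{1-r}$ after Hayman's lemma, and the exponent $\varsigma$ cannot be taken arbitrarily small (it is bounded below by $t_0\sum_l\n{\alpha^l}$, with $t_0$ fixed by the contradiction hypothesis). The paper therefore splits into the two cases of \eqref{e3.2} and treats the second case --- together with the case of universal cover $\mathbb{C}^m$ --- by an entirely different route: the truncated second main theorem of Proposition \ref{p3.2}, yielding the classical defect relation \eqref{e3.16}, combined with the comparison $\delta_{\mu_0}^{f}(D)\leq\hat{\delta}_{\mu_0}^f(D)$ of \eqref{e2.4}. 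Your proposal contains no counterpart to this case, so even granting your Schwarz-type lemma in the first regime, the proof of \eqref{e1.6} would remain incomplete.
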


It is worthwhile to mention when $d=1$, $k=n=N$ and $V=\mathbb{P}^n(\mathbb{C})$, Theorem \ref{t1.2} recovers exactly Fujimoto's initial work.
As
$H_V(d)\leq\left(\begin{array}{ll}
d+N\\
\hspace{3mm}N\\
\end{array}\right)$,
the truncation level in Theorem \ref{t1.2} is smaller than that in \cite[Theorem 1.2]{TT} and also better than those in \cite{RS1,Ya}, yet the upper bound in \eqref{e1.6} might be larger than those in \cite{RS1,TT,Ya} (depending on their $\epsilon$).

\section{Preliminaries}\label{Pre} 
\noindent In this auxiliary section, we describe some basic notations and necessary results that are used afterwards throughout this paper.

Denote $\nm{z}^2=\sum_{\jmath=1}^m\n{z_\jmath}^2$ for $z=\left(z_1,z_2,\ldots,z_m\right)\in\mathbb{C}^m$.
Write $B(r)=\left\{z\in\mathbb{C}^m:\nm{z}<r\right\}$ and $S(r)=\left\{z\in\mathbb{C}^m:\nm{z}=r\right\}$ for $r\in\left(0,\infty\right)$, and $B(\infty)=\mathbb{C}^m$.
Define
\begin{equation*}
\begin{split}
&\upsilon_\jmath=\left(dd^c\nm{z}^2\right)^\jmath
\hspace{2mm}\mathrm{for}\hspace{2mm}\jmath=1,2,\ldots,m\hspace{2mm}\mathrm{on}\hspace{2mm}\mathbb{C}^m,\hspace{2mm}\mathrm{and}\\
&\sigma_m=d^c\log\nm{z}^2\wedge\left(dd^c\log\nm{z}^2\right)^{m-1}\hspace{2mm}\mathrm{on}\hspace{2mm}\mathbb{C}^m\setminus\left\{0\right\}.
\end{split}
\end{equation*}

Suppose $f:B(R_0)\to\mathbb{P}^n(\mathbb{C})$ is a meromorphic map with $0<R_0\leq\infty$.
Choose holomorphic functions $\mathtt{f}_0,\mathtt{f}_1,\ldots,\mathtt{f}_n$ with $\mathfrak{f}=\left(\mathtt{f}_0,\mathtt{f}_1,\ldots,\mathtt{f}_n\right):B(R_0)\setminus\mathtt{I}_f\to\mathbb{C}^{n+1}$ a reduced representation of $f$.
Notice the singularity set $\mathtt{I}_f:=\left\{z\in B(R_0):\mathtt{f}_0(z)=\mathtt{f}_1(z)=\cdots=\mathtt{f}_n(z)=0\right\}$ of $f$ is of dimension at most $m-2$.
Fix this reduced representation $\mathfrak{f}$ of $f$.
Then, $\Omega_f=dd^c\log\nm{\mathfrak{f}}^2$ will be the pull-back of the normalized Fubini-Study metric form on $\mathbb{P}^n(\mathbb{C})$ through $f$.

Given $r_0\in\left(0,R_0\right)$, the \textsl{characteristic function} of $f$ for $r\in\left(r_0,R_0\right)$ is defined as
\begin{equation}\label{e2.1}
T_f(r,r_0)=\int_{r_0}^r\frac{dt}{t^{2m-1}}\int_{B(t)}\Omega_f\wedge\upsilon_{m-1},
\end{equation}
which can also be written as
\begin{equation}\label{e2.2}
T_f(r,r_0)=\int_{S(r)}\log\nm{\mathfrak{f}}\sigma_m-\int_{S(r_0)}\log\nm{\mathfrak{f}}\sigma_m.
\end{equation}

For a holomorphic function $\psi$ on an open subset $U$ of $\mathbb{C}^m$ and $\alpha=\left(\alpha_1,\alpha_2,\ldots,\alpha_m\right)\in\mathbb{Z}^m_{\geq0}$, an $m$-tuple of nonnegative integers, set $\n{\alpha}:=\sum_{\jmath=1}^m\alpha_\jmath$ and $D^\alpha\psi:=D_1^{\alpha_1}D_2^{\alpha_2}\cdots D_m^{\alpha_m}\psi$ where $D_\jmath\psi=\frac{\partial\psi}{\partial z_\jmath}$ for $\jmath=1,2,\ldots,m$.
Define $\nu_\psi^0:U\to\mathbb{Z}_{\geq0}$ by $\nu_\psi^0(z):=\max\left\{\kappa:D^\alpha\psi(z)=0\right\}$ for all possible $\alpha\in\mathbb{Z}^m_{\geq0}$ with $\n{\alpha}<\kappa$, and write $\operatorname{supp}\,(\nu^0_\psi):=\overline{\left\{z\in U:\nu^0_\psi(z)>0\right\}}$.

For a meromorphic function $\varphi$ on $U$, there exist two coprime holomorphic functions $\psi_1,\psi_2$ on $U$ with $\varphi=\frac{\psi_1}{\psi_2}$ such that $\nu_\varphi^\infty:=\nu_{\psi_2}^0$ and $\operatorname{supp}\,(\nu^\infty_\varphi):=\operatorname{supp}\,(\nu^0_{\psi_2})$.

Take $\mu_0>0$ an integer or $\infty$.
For a meromorphic map $f:B(R_0)\to\mathbb{P}^n(\mathbb{C})$ with a reduced representation $\mathfrak{f}$, and a hypersurface $D$ in $\mathbb{P}^n(\mathbb{C})$ of degree $d$ with $Q$ its defining homogeneous polynomial, let $\nu^f_D:=\nu_{Q(\mathfrak{f})}^0$ be the intersection divisor associated with $f$ and $D$ on $B(R_0)\setminus\mathtt{I}_f$.
The \textsl{valence function} of $f$ regarding $D$, with truncation level $\mu_0$, is defined to be
\begin{equation}\label{e2.3}
N_f^{\mu_0}(r,r_0;D)=\int_{r_0}^r\frac{n_f^{\mu_0}(t;D)}{t}\,dt,
\end{equation}
where
\begin{equation}
n_f^{\mu_0}(t;D):=\left\{\begin{array}{ll}
\frac{1}{t^{2m-2}}\bigintsss_{\operatorname{supp}\,(\nu^f_D)\cap B(t)}\min\left\{\nu^f_D,\mu_0\right\}\upsilon_{m-1}&\mathrm{when}\hspace{2mm}m\geq2,\\\\
\sum_{\nm{z}<t}\min\left\{\nu^f_D(z),\mu_0\right\}&\mathrm{when}\hspace{2mm}m=1.\nonumber
\end{array}\right.
\end{equation}

The first main theorem says $N_f^{\mu_0}(r,r_0;D)\leq d\hspace{0.2mm}T_f(r,r_0)+O\left(1\right)$ (see \cite{Ha1,Ha2}).
Let $\hat{\delta}_{\mu_0}^f(D)$ be Nevanlinna's defect or its high dimensional extension by Stoll, that is defined by
\begin{equation}
\hat{\delta}_{\mu_0}^f(D)=1-\limsup_{r\to R_0}\frac{N_f^{\mu_0}(r,r_0;D)}{d\hspace{0.2mm}T_f(r,r_0)}.\nonumber
\end{equation}
When $\lim\limits_{r\to R_0}T_f(r,r_0)=\infty$, then \cite[Proposition 5.6]{Fu3} or \cite[Proposition 2.1]{RS1} yields
\begin{equation}\label{e2.4}
0\leq\delta_{\mu_0}^{f}(D)\leq\hat{\delta}_{\mu_0}^f(D)\leq1.
\end{equation}

Below, we recall two results of An, Quang and Thai \cite{AQT,QA}.
The first one is an extension to hypersurfaces of the celebrated Nochka weights \cite{No1,No2} concerning hyperplanes.

\begin{prop}\label{p2.1}{\rm(\cite[Lemma 3.3]{AQT} or \cite[Lemma 9]{QA})}
Assume that $V\subseteq\mathbb{P}^N(\mathbb{C})$ is an irreducible projective algebraic variety of dimension $n\left(n\leq N\right)$.
Let $D_1,D_2,\ldots,D_q$ be $q>2k-n+1\left(k\geq n\right)$ hypersurfaces in $\mathbb{P}^N(\mathbb{C})$ of common degree $d$ that are located in $k$-subgeneral position with respect to $V$.
Then, there exist $q$ rational numbers $0<\omega_1,\omega_2,\ldots,\omega_q\leq1$ such that
\vskip2pt\noindent {\bf(a.)} for $\varpi:=\max\limits_{j\in\left\{1,2,\ldots,q\right\}}\left\{\omega_j\right\}$, one has
\begin{equation}\label{e2.5}
\omega_j\leq\varpi=\frac{\sum_{j=1}^q\omega_j-n-1}{q-2k+n-1}\hspace{4mm}and\hspace{4mm}\frac{n+1}{2k-n+1}\leq\varpi\leq\frac{n}{k}
\footnote{Note this upper bound in the second estimate of \eqref{e2.5} was discovered by Toda \cite{Nj}.};
\end{equation}
\vskip2pt\noindent {\bf(b.)} for each subset $\mathcal{R}$ of $\left\{1,2,\ldots,q\right\}$ with $\#\mathcal{R}=k+1$, one has $\sum_{j\in\mathcal{R}}\omega_j\leq n+1$;
\vskip2pt\noindent {\bf(c.)} for $q$ arbitrarily given constants $E_1,E_2,\ldots,E_q\geq1$ and each set $\mathcal{R}$ as in {\bf(b.)}, there exists a subset $\mathcal{T}$ of $\mathcal{R}$ with $\#\mathcal{T}=\operatorname{rank}\left\{Q_j\right\}_{j\in\mathcal{T}}=n+1$ satisfying
\begin{equation}\label{e2.6}
\prod_{j\in\mathcal{R}}E_j^{\,\omega_j}\leq\prod_{j\in\mathcal{T}}E_j,
\end{equation}
where $Q_j$ is the defining homogeneous polynomial of $D_j$ in $\mathbb{P}^N(\mathbb{C})$ for $j=1,2,\ldots,q$.
\end{prop}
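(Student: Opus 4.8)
The plan is to transfer the statement from degree-$d$ hypersurfaces to hyperplanes by a Veronese-type construction, and then to manufacture the weights $\omega_j$ through Nochka's combinatorial machinery applied to the resulting linear configuration on a variety of dimension $n$. First, fix a basis $v_0,v_1,\ldots,v_{H_V(d)-1}$ of the quotient $\mathcal{H}_d/\pr{\mathcal{H}_d\cap\mathcal{I}_V}$ and let $\Phi=\br{v_0:v_1:\cdots:v_{H_V(d)-1}}$ be the induced morphism, which embeds $V$ as an irreducible, non-degenerate variety $\widetilde{V}\subseteq\mathbb{P}^{H_V(d)-1}$ of the same dimension $n$. Each defining polynomial $Q_j$, being a form of degree $d$, descends modulo $\mathcal{I}_V$ to a linear form $L_j$ with $Q_j|_V=L_j\circ\Phi$; accordingly the rank of a subfamily $\bre{Q_j}_{j\in S}$ with respect to $V$ is read off as the rank of $\bre{L_j}_{j\in S}$. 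The first thing to check is that the $k$-subgeneral position \eqref{e1.5} of $D_1,D_2,\ldots,D_q$ with respect to $V$ is equivalent to the hyperplanes $\bre{L_j=0}$ being in $k$-subgeneral position with respect to $\widetilde{V}$, i.e.\ any $k+1$ of them meet $\widetilde{V}$ only in the empty set.

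Next I would encode the configuration by a \emph{geometric rank} $\operatorname{rk}(S)$ on subsets $S\subseteq\bre{1,2,\ldots,q}$, recording the codimension cut out in $\widetilde{V}$ by $\bre{L_j}_{j\in S}$, capped at $n+1$. Using the irreducibility of $V$, this $\operatorname{rk}$ is a monotone, submodular, integer-valued function with $\operatorname{rk}(\bre{j})=1$ and $\operatorname{rk}(S)\leq n+1$, and the $k$-subgeneral position forces the crucial constraint that every $S$ with $\operatorname{rk}(S)\leq n$ satisfies $\#S\leq k$; equivalently, any $k+1$ indices already exhaust the rank. This is exactly the hypothesis under which Nochka's weight construction operates. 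Applying it to $\operatorname{rk}$ produces rationals $0<\omega_1,\omega_2,\ldots,\omega_q\leq1$ together with $\varpi=\max_j\omega_j$ obeying the identity and the two-sided estimate of {\bf(a.)} (the upper bound $\varpi\leq\frac{n}{k}$ being the Toda refinement extracted from the same combinatorics), while the subadditivity built into the weights, specialized to a set $\mathcal{R}$ of size $k+1$ with $\operatorname{rk}(\mathcal{R})=n+1$, yields $\sum_{j\in\mathcal{R}}\omega_j\leq n+1$, which is {\bf(b.)}.

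For {\bf(c.)} I would argue by a greedy extraction. Given $E_1,E_2,\ldots,E_q\geq1$ and $\mathcal{R}$ as in {\bf(b.)}, relabel the indices of $\mathcal{R}$ so that $E_{j_1}\geq E_{j_2}\geq\cdots\geq E_{j_{k+1}}$, run through them in this order, and retain an index exactly when it strictly raises the rank; since $\operatorname{rk}(\mathcal{R})=n+1$ this selects $\mathcal{T}\subseteq\mathcal{R}$ with $\#\mathcal{T}=\operatorname{rk}\bre{Q_j}_{j\in\mathcal{T}}=n+1$. Taking logarithms, the target \eqref{e2.6} reads $\sum_{j\in\mathcal{R}}\omega_j\log E_j\leq\sum_{j\in\mathcal{T}}\log E_j$, and this follows from the Nochka property that the ordered partial sums of the weights are dominated by the increments of $\operatorname{rk}$ along the retained chain, combined with the monotonicity of the $E_{j_s}$ enforced by the chosen ordering.

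The main obstacle I anticipate is twofold. First, one must verify that the geometric rank arising from the Veronese picture satisfies \emph{precisely} the submodularity and subgeneral-position constraints demanded by Nochka's lemma; the codimension counts on $\widetilde{V}$ have to behave as they would for an honest linear section, which is where the irreducibility of $V$ is indispensable. Second, the redistribution step in {\bf(c.)} is delicate: one has to align the decreasing ordering by $E_j$ with the rank-increasing selection of $\mathcal{T}$ and then invoke the sharp weight inequalities in the correct telescoped form. Reconciling these two orderings, rather than the reduction itself, is the technical heart of the argument.
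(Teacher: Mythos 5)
The paper does not actually prove Proposition \ref{p2.1}: it is quoted as a black box from \cite{AQT} and \cite{QA}, so your proposal must be measured against those cited arguments. Your overall skeleton (Veronese-type reduction to hyperplanes on $\widetilde{V}\subseteq\mathbb{P}^{H_V(d)-1}$, a rank function fed into an abstract Nochka lemma, then greedy selection plus Abel summation for {\bf(c.)}) is the right shape, but there is a genuine gap at the decisive step: the ``geometric rank'' you define --- the codimension cut out in $\widetilde{V}$ by $\{L_j\}_{j\in S}$, capped at $n+1$ --- is \emph{not} submodular, and irreducibility of $V$ does not rescue this. A counterexample lives inside the proposition's own setting: take $V=\mathbb{P}^1$ (so $n=N=1$), $d=3$ (so $\widetilde{V}$ is the twisted cubic in $\mathbb{P}^3$), and cubics $Q_1,Q_2,Q_3$ with zero sets $\{p_1,p_2,p_3\}$, $\{p_1,p_2,p_4\}$, $\{p_3,p_4,p_5\}$, the $p_i$ distinct; one can complete this family by two generic cubics to get $q=5>2k-n+1$ hypersurfaces in $k$-subgeneral position with $k=2$. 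For $A=\{1,3\}$ and $B=\{2,3\}$ one computes $\operatorname{rk}(A)=\operatorname{rk}(B)=\operatorname{rk}(A\cap B)=1$ but $\operatorname{rk}(A\cup B)=2$, so $\operatorname{rk}(A\cup B)+\operatorname{rk}(A\cap B)>\operatorname{rk}(A)+\operatorname{rk}(B)$. Submodularity is not a technicality here: Nochka's construction of the weights (in its abstract form, e.g.\ Vojta's matroid formulation) uses it essentially, to show that unions of extremal sets are extremal when building the Nochka diagram. So the sentence ``this is exactly the hypothesis under which Nochka's weight construction operates'' is where the proof breaks: the weights $\omega_j$ are never actually produced.

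The repair is to run Nochka not on the geometric rank but on the \emph{linear} rank $\operatorname{rk}_{\rm lin}(S):=\dim_{\,\mathbb{C}}\operatorname{span}\left\{Q_j \bmod \mathcal{I}_V: j\in S\right\}$ in $\mathcal{H}_d/\left(\mathcal{H}_d\cap\mathcal{I}_V\right)$ (equivalently, the linear-algebra rank of your $L_j$'s), truncated at $n+1$; this is an honest matroid rank function. The hypothesis \eqref{e1.5} transfers to it by the dimension count you already sketched: for $\#S\leq k+1$, enlarging $S$ to a set of $k+1$ indices and removing one hypersurface at a time shows the geometric rank of $S$ is at least $\#S-(k-n)$, and the geometric rank never exceeds $\operatorname{rk}_{\rm lin}(S)$ because the common zero locus of $\{Q_j\}_{j\in S}$ on $V$ equals that of any subfamily spanning the same space modulo $\mathcal{I}_V$, which by Krull's theorem has dimension at least $n-\operatorname{rk}_{\rm lin}(S)$. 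With this rank function the abstract Nochka lemma yields {\bf(a.)} and {\bf(b.)} (Toda's refinement giving the upper bound $n/k$), and your greedy-plus-Abel-summation derivation of {\bf(c.)} --- which, as you set it up, only needs monotonicity, unit increments, and the key inequality $\sum_{j\in S}\omega_j\leq\operatorname{rk}(S)$ for $\#S\leq k+1$, not submodularity --- goes through verbatim; moreover the selected $\mathcal{T}$ automatically satisfies $\operatorname{rank}\{Q_j\}_{j\in\mathcal{T}}=n+1$ in the sense required by Lemma \ref{l2.2}, since full truncated rank on an $(n+1)$-element set forces full linear rank. This choice of rank function is, in substance, how the cited proofs in \cite{AQT,QA} proceed.
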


\begin{lem}\label{l2.2}{\rm(\cite[Lemma 4.2]{AQT} or \cite[Lemma 11]{QA})}
Under the same assumptions of Proposition \ref{p2.1}, for each subset $\mathcal{T}\subseteq\left\{1,2,\ldots,q\right\}$ with $\#\mathcal{T}=\operatorname{rank}\left\{Q_j\right\}_{j\in\mathcal{T}}=n+1$, there are $H_V(d)-n-1$ hypersurfaces $D^*_1,D^*_2,\ldots,D^*_{H_V(d)-n-1}$ in $\mathbb{P}^N(\mathbb{C})$ such that
\begin{equation*}
\operatorname{rank}\left\{\left\{Q_j\right\}_{j\in\mathcal{T}}\cup\left\{Q^*_i\right\}_{i=1}^{H_V(d)-n-1}\right\}=H_V(d).
\end{equation*}
Here, $Q_j$ and $Q^*_i$ are the homogeneous polynomials defining $D_j$ and $D^*_i$, respectively.
\end{lem}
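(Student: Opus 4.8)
The plan is to read Lemma~\ref{l2.2} as a basis-extension statement in the finite-dimensional complex vector space
\[
W:=\frac{\mathcal{H}_d}{\mathcal{H}_d\cap\mathcal{I}_V},
\]
whose dimension is by definition $H_V(d)$. The first step is to make the word $\operatorname{rank}$ precise: for a family of degree-$d$ homogeneous polynomials, its rank with respect to $V$ is the dimension of the linear span of their residues in $W$ (equivalently, their span as global sections of $\mathcal{O}_V(d)$). Under this reading the hypothesis $\#\mathcal{T}=\operatorname{rank}\{Q_j\}_{j\in\mathcal{T}}=n+1$ says exactly that the residues $\{[Q_j]\}_{j\in\mathcal{T}}$ form a linearly independent set of $n+1$ vectors in $W$; in particular $n+1\leq\dim W=H_V(d)$.

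The second step is the elementary completion of an independent set to a basis. Since $\{[Q_j]\}_{j\in\mathcal{T}}$ is linearly independent in the $H_V(d)$-dimensional space $W$, I would adjoin $H_V(d)-n-1$ further vectors $v_1,\dots,v_{H_V(d)-n-1}$ so that the enlarged family is a basis of $W$. Lifting each $v_i$ through the natural surjection $\mathcal{H}_d\to W$ produces homogeneous polynomials $Q^*_1,\dots,Q^*_{H_V(d)-n-1}$ of degree $d$ with residue $v_i$. Because each $v_i$ is a basis vector and hence nonzero, each $Q^*_i$ is a nonzero polynomial and so cuts out a genuine hypersurface $D^*_i\subseteq\mathbb{P}^N(\mathbb{C})$ of degree $d$.

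By construction the residues of $\{Q_j\}_{j\in\mathcal{T}}\cup\{Q^*_i\}_{i=1}^{H_V(d)-n-1}$ form a basis of $W$, whence the combined family has rank $H_V(d)$, which is precisely the claim. I do not expect a genuine obstacle: the $k$-subgeneral position hypothesis imported from Proposition~\ref{p2.1} is not actually needed here beyond ensuring the ambient setup is well posed, and the only point deserving a line of care is confirming that the lifted polynomials are nonzero---so that they honestly define hypersurfaces---which is automatic from their residues belonging to a basis. The whole content reduces to the inequality $n+1\leq H_V(d)=\dim W$ together with the standard extension of a linearly independent set to a basis.
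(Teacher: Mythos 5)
Your proof is correct and is essentially the argument behind this statement: the paper does not prove Lemma \ref{l2.2} itself but quotes it from \cite{AQT,QA}, where ``rank'' is precisely the dimension of the span of the residue classes in $\mathcal{H}_d/\left(\mathcal{H}_d\cap\mathcal{I}_V\right)$ and the proof is exactly your completion of the $n+1$ independent classes $\left\{[Q_j]\right\}_{j\in\mathcal{T}}$ to a basis of that $H_V(d)$-dimensional space, followed by lifting the new basis vectors to nonzero degree-$d$ polynomials. Your reading of rank is moreover the one forced by how the lemma is used later in the paper (the identity $W_{\alpha^1\cdots\alpha^{H_V(d)}}(F_{\mathcal{T}})=C_{\mathcal{T}}\,W_{\alpha^1\cdots\alpha^{H_V(d)}}(F)$ requires linear independence modulo $\mathcal{I}_V$, not in $\mathcal{H}_d$), so there is no gap.
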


\section{Proof of Theorem \ref{t1.2}}\label{Prof} 
\noindent First, it's interesting to notice the following consequence of our Theorem \ref{t1.2}.

\begin{thm}\label{t3.1}
Suppose $M$ is an $m$-dimensional complete K\"{a}hler manifold such that the universal covering of $M$ is biholomorphically isomorphic to a ball in $\mathbb{C}^m$, and $H_1,H_2,\ldots,H_q$ are $q$ hyperplanes in $\mathbb{P}^N(\mathbb{C})$ located in general position.
Let $f:M\to\mathbb{P}^N(\mathbb{C})$ be a meromorphic map, whose image spans a linear subspace with dimension $n$ not contained in any of $H_1,H_2,\ldots,H_q$, such that the {\bf condition $\mathbf{C}(\rho)$} is satisfied.
Then, one has the following defect relation
\begin{equation}\label{e3.1}
\sum_{j=1}^q\delta_n^f(H_j)\leq2N-n+1+\rho\,n\left(2N-n+1\right).
\end{equation}
\end{thm}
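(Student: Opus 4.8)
The plan is to derive Theorem~\ref{t3.1} directly from Theorem~\ref{t1.2} by specializing the target variety and the degrees. First I would take $V$ to be the $n$-dimensional linear subspace spanned by $f(M)$; this is an irreducible projective subvariety of $\mathbb{P}^N(\mathbb{C})$ (isomorphic to $\mathbb{P}^n(\mathbb{C})$), and $f$ legitimately factors as a map $f:M\to V$. Since $V\nsubseteq H_j$, each hyperplane $H_j$ meets $V$ properly, so every $H_j$ plays the role of a degree-one hypersurface and the least common multiple of the degrees is $d=1$.

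Next I would pin down the two numerical inputs of \eqref{e1.6}. For the position, because $H_1,H_2,\ldots,H_q$ are in general position in $\mathbb{P}^N(\mathbb{C})$, any $N+1$ of them satisfy $\bigcap_{s=0}^N\operatorname{supp}(H_{j_s})=\emptyset$, hence also $(\bigcap_{s=0}^N\operatorname{supp}(H_{j_s}))\cap V=\emptyset$; by \eqref{e1.5} this is exactly the statement that $H_1,H_2,\ldots,H_q$ lie in $N$-subgeneral position with respect to $V$, so I may take $k=N\ (\geq n)$. For the Hilbert function, choosing coordinates so that $V=\{w_{n+1}=\cdots=w_N=0\}$, the ideal $\mathcal{I}_V$ is generated by $w_{n+1},\ldots,w_N$, whence $\mathcal{H}_1\cap\mathcal{I}_V$ is the $(N-n)$-dimensional space of linear forms in $w_{n+1},\ldots,w_N$, and thus $H_V(1)=(N+1)-(N-n)=n+1$. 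In particular the truncation level is $H_V(d)-1=n$, which is precisely the $\delta_n^f$ appearing in \eqref{e3.1}.

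Substituting $d=1$, $k=N$ and $H_V(d)=n+1$ into the right-hand side of \eqref{e1.6} then yields
\begin{equation*}
\frac{2N-n+1}{n+1}\left\{(n+1)+\rho\,(n+1)\,n\right\}=(2N-n+1)(1+\rho\,n)=2N-n+1+\rho\,n(2N-n+1),
\end{equation*}
which is exactly the bound claimed in \eqref{e3.1}. (When $q\leq N$ the subgeneral position condition is vacuous and the estimate holds trivially from $\sum_{j=1}^q\delta_n^f(H_j)\leq q\leq N<2N-n+1$, so I may assume $q\geq N+1$ and apply Theorem~\ref{t1.2}.)

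The only genuine obstacle is matching the non-degeneracy hypothesis: Theorem~\ref{t1.2} requires $f:M\to V$ to be algebraically non-degenerate, whereas Theorem~\ref{t3.1} only supplies that $f(M)$ spans $V$ linearly, i.e.\ that $f$ is linearly non-degenerate in $V\cong\mathbb{P}^n(\mathbb{C})$. I would resolve this by noting that for $d=1$ the proof of Theorem~\ref{t1.2} invokes non-degeneracy solely through the degree-one linear system on $V$: the quantity that must be non-degenerate is the composition of $f$ with the $d$-uple embedding of $V$ into $\mathbb{P}^{H_V(d)-1}(\mathbb{C})$, and for $d=1$ this composition is simply $f$ viewed in $V\cong\mathbb{P}^n(\mathbb{C})$, whose linear non-degeneracy is exactly the spanning hypothesis. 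Hence the $d=1$ specialization of Theorem~\ref{t1.2} applies under precisely the assumptions of Theorem~\ref{t3.1}, and the proof is complete.
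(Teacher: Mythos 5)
Your proposal is correct and follows exactly the route the paper intends: the paper presents Theorem~\ref{t3.1} as a direct consequence of Theorem~\ref{t1.2}, and your specialization ($V$ equal to the linear span of $f(M)$, $k=N$, $d=1$, $H_V(1)=n+1$, plus the trivial bound when $q\leq N$) is precisely that reduction. Your treatment of the non-degeneracy mismatch --- noting that for $d=1$ the proof of Theorem~\ref{t1.2} uses algebraic non-degeneracy only to make $F$ linearly non-degenerate, which in this case is exactly the spanning hypothesis --- correctly fills in the one detail the paper leaves implicit.
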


This is a Cartan-Nochka type result.
For the classical defect relation, the associated second main theorem was originally suggested by Cartan and proved by Nochka; for that with truncation, the associated second main theorem was initially proved by Fujimoto \cite[Theorem 3.2.12]{Fu5} and refined by Noguchi \cite[Theorem 3.1]{Nj} with a better estimate about error terms.

For each $j=1,2,\ldots,q$, set $Q_j$ to be the homogeneous polynomial of degree $d_j$ defining $D_j$ in $\mathbb{P}^N(\mathbb{C})$; replacing $Q_j$ by $Q_j^{d/d_j}$ when necessary, we may assume $Q_1,Q_2,\ldots,Q_q\in\mathcal{H}_d$, where from now on we use $d$ to represent the least common multiple of $d_1,d_2,\ldots,d_q$.

Now, we will proceed to prove Theorem \ref{t1.2} by considering two situations
\begin{equation}\label{e3.2}
\limsup_{r\to R_0}\frac{T_f(r,r_0)}{\log\frac{1}{R_0-r}}<\infty
\hspace{4mm}\mathrm{and}\hspace{4mm}\limsup_{r\to R_0}\frac{T_f(r,r_0)}{\log\frac{1}{R_0-r}}=\infty
\end{equation}
when the universal covering of $M$ is biholomorphic to a finite ball $B(R_0)$ in $\mathbb{C}^m$.

Let $\pi:\widetilde{M}\to M$ be the universal covering of $M$.
Then, $f\circ\pi:\widetilde{M}\to V$ is again algebraically non-degenerate since $f:M\to V$ is algebraically non-degenerate; also, one has $\delta_{H_V(d)-1}^{f}(D_j)\leq\delta_{H_V(d)-1}^{f\circ\pi}(D_j)$.
Hence, by lifting $f$ to the covering, we fix $M=B(1)$ subsequently.

Consider first the former case in \eqref{e3.2} that is more important.

Assume $\mathfrak{f}=\left(\mathtt{f}_0,\mathtt{f}_1,\ldots,\mathtt{f}_N\right)$ and $Q_j=\sum_{\beta\in\mathscr{I}_d}a_{j\beta}w^\beta$, where $\mathscr{I}_d$ is the set of $\left(N+1\right)$-tuples $\beta=\left(\beta_0,\beta_1,\ldots,\beta_N\right)\in\mathbb{Z}_{\geq0}^{N+1}$ with $\n{\beta}:=\sum_{\imath=0}^N\beta_\imath=d$ and $w^\beta:=w_0^{\beta_0}w_1^{\beta_1}\cdots w_N^{\beta_N}$.
For every $j=1,2,\ldots,q$, notice $\n{Q_j(\mathfrak{f})}=\n{\sum_{\beta\in\mathscr{I}_d}a_{j\beta}\mathfrak{f}^{\hspace{0.2mm}\beta}}
\leq\left(\sum_{\beta\in\mathscr{I}_d}\varrho_\beta\n{a_{j\beta}}\right)\nm{\mathfrak{f}}^d$ so that
\begin{equation}\label{e3.3}
\n{Q_j(\mathfrak{f})}\leq\varrho\nm{\mathfrak{f}}^d
\hspace{6mm}\mathrm{with}\hspace{2mm}\varrho:=\sum_{j=1}^q\sum_{\beta\in\mathscr{I}_d}\varrho_\beta\n{a_{j\beta}}>0.
\end{equation}

Fix a basis $\left\{\phi_1,\phi_2,\ldots,\phi_{H_V(d)}\right\}\subseteq\mathcal{H}_d$ of $\frac{\mathcal{H}_d}{\mathcal{H}_d\cap\mathcal{I}_V}$.
Because $f$ is algebraically non-degenerate, $F:=\left[\phi_1(\mathfrak{f}):\phi_2(\mathfrak{f}):\cdots:\phi_{H_V(d)}(\mathfrak{f})\right]:M\to\mathbb{P}^{H_V(d)-1}(\mathbb{C})$ is linearly non-degenerate.
In view of \cite[Proposition 4.5]{Fu3}, there exist $H_V(d)$ $m$-tuples $\alpha^l=\left(\alpha_1^l,\alpha_2^l,\ldots,\alpha_m^l\right)\in\mathbb{Z}^m_{\geq0}$ with
\begin{equation}\label{e3.4}
\n{\alpha^l}=\sum_{\jmath=1}^m\alpha^l_\jmath<l\hspace{4mm}\mathrm{and}\hspace{4mm}\sum_{l=1}^{H_V(d)}\n{\alpha^l}\leq\frac{H_V(d)\left(H_V(d)-1\right)}{2},
\end{equation}
such that the Wronskian $W_{\alpha^1\cdots\alpha^{H_V(d)}}(F)$ of $F$ is not identically zero on $M$, where
\begin{equation}\label{e3.5}
W_{\alpha^1\cdots\alpha^{H_V(d)}}(F):=\det\left(D^{\alpha^l}\phi_{\ell}(\mathfrak{f})\right)_{1\leq l,\hspace{0.2mm}\ell\leq H_V(d)}.
\end{equation}

For any subset $\mathcal{T}\subseteq\left\{1,2,\ldots,q\right\}$ with $\#\mathcal{T}=\operatorname{rank}\left\{Q_j\right\}_{j\in\mathcal{T}}=n+1$, use the hypersurfaces in Lemma \ref{l2.2} to define
$F_{\mathcal{T}}:=\left[\left\{Q_j(\mathfrak{f})\right\}_{j\in\mathcal{T}}:Q^*_1(\mathfrak{f}):\cdots:Q^*_{H_V(d)-n-1}(\mathfrak{f})\right]$ (by abuse of notations).
Then, there is a constant $C_{\mathcal{T}}\neq0$ such that $W_{\alpha^1\cdots\alpha^{H_V(d)}}(F_{\mathcal{T}})=C_{\mathcal{T}}\hspace{0.2mm}W_{\alpha^1\cdots\alpha^{H_V(d)}}(F)$.

Fix $\mathfrak{w}\in V\cap f(M)$.
Abusing the notation, $\mathfrak{w}=c\hspace{0.2mm}w$ for some $w\in\mathbb{C}^{N+1}\setminus\left\{0\right\}$ and all complex numbers $c\neq0$.
Select a subset $\mathcal{R}$ of $\left\{1,2,\ldots,q\right\}$ with $\#\mathcal{R}=k+1$ such that $\n{Q_j(w)}\leq\n{Q_s(w)}$ when $j\in\mathcal{R}$ and $s\in\left\{1,2,\ldots,q\right\}\setminus\mathcal{R}$; seeing the $k$-subgeneral position hypothesis \eqref{e1.5} and the continuity of $\frac{\n{Q_s(w)}^2}{\left(\n{w_0}^2+\n{w_1}^2+\cdots+\n{w_N}^2\right)^d}$, there exists a constant $\gamma_Y>0$ such that
\begin{equation}\label{e3.6}
\gamma_Y\nm{\mathfrak{f}(z)}^d\leq\min_{s\in\left\{1,2,\ldots,q\right\}\setminus\mathcal{R}}\n{Q_s(\mathfrak{f})(z)}
\end{equation}
for all $z\in f^{-1}(Y)\setminus\mathtt{I}_f$, where $Y$ is an appropriate open neighborhood of $\mathfrak{w}$ in $V$.

Take such a $z$ and set $E_j:=\frac{\varrho\nm{\mathfrak{f}(z)}^d}{\n{Q_j(\mathfrak{f})(z)}}\geq1$ for $j\in\mathcal{R}$; then, Proposition \ref{p2.1} - Parts {\bf(b.)}\&{\bf(c.)} yields a subset $\mathcal{T}$ of $\mathcal{R}$ with $\#\mathcal{T}=n+1$ such that the estimate \eqref{e2.6} holds.
Noting \eqref{e3.3}, \eqref{e3.6} and the estimate concerning these $E_j$ for $\mathcal{R},\mathcal{T}$, one observes that
\begin{equation*}
\begin{split}
&\,\frac{\nm{\mathfrak{f}(z)}^{d\sum_{j=1}^q\omega_j}\n{W_{\alpha^1\cdots\alpha^{H_V(d)}}(F)(z)}}
{\n{Q_1(\mathfrak{f})(z)}^{\omega_1}\cdots\n{Q_q(\mathfrak{f})(z)}^{\omega_q}}
\leq\prod_{j\in\mathcal{R}}\left(\frac{\varrho\nm{\mathfrak{f}(z)}^d}{\n{Q_j(\mathfrak{f})(z)}}\right)^{\omega_j}
\frac{\n{W_{\alpha^1\cdots\alpha^{H_V(d)}}(F)(z)}}
{\varrho^{\sum_{j\in\mathcal{R}}\omega_j}\gamma_Y^{\sum_{s\in\left\{1,2,\ldots,q\right\}\setminus\mathcal{R}}\omega_s}}\\
\leq&\,K\frac{\nm{\mathfrak{f}(z)}^{d\left(n+1\right)}\n{W_{\alpha^1\cdots\alpha^{H_V(d)}}(F)(z)}}{\prod_{j\in\mathcal{T}}\n{Q_j(\mathfrak{f})(z)}}
\leq K\frac{\nm{\mathfrak{f}(z)}^{dH_V(d)}\n{W_{\alpha^1\cdots\alpha^{H_V(d)}}(F_{\mathcal{T}})(z)}}
{\prod_{j\in\mathcal{T}}\n{Q_j(\mathfrak{f})(z)}\prod_{i=1}^{H_V(d)-n-1}\n{Q^*_i(\mathfrak{f})(z)}}.
\end{split}
\end{equation*}
Here, and hereafter, $K>0$ represents an absolute constant whose value may change from line to line but (in general) can be interpreted appropriately within the context.

For simplicity, put $\varphi:=\frac{W_{\alpha^1\cdots\alpha^{H_V(d)}}(F)}{Q_1^{\omega_1}(\mathfrak{f})\cdots Q_q^{\omega_q}(\mathfrak{f})}$ and $\aleph(F_{\mathcal{T}}):=\frac{W_{\alpha^1\cdots\alpha^{H_V(d)}}(F_{\mathcal{T}})}
{\prod_{j\in\mathcal{T}}Q_j(\mathfrak{f})\prod_{i=1}^{H_V(d)-n-1}Q^*_i(\mathfrak{f})}$.
Considering the compactness of $V$, we have
\begin{equation}\label{e3.7}
\nm{\mathfrak{f}(z)}^{d\left\{\sum_{j=1}^q\omega_j-H_V(d)\right\}}\n{\varphi(z)}\leq K\sum_{\mathcal{R},\mathcal{T}}\n{\aleph(F_{\mathcal{T}})(z)}
\hspace{6mm}\forall\hspace{2mm}z\in M\setminus\mathtt{I}_f.
\end{equation}
Here, the summation is taken over all the subsets $\mathcal{T}\subseteq\mathcal{R}\subseteq\left\{1,2,\ldots,q\right\}$ with $\#\mathcal{R}=k+1$ and $\#\mathcal{T}=n+1$.
Since $q,k,n$ are all finite, there can only be finitely many possibilities.

On the other hand, one may observe that
\begin{equation}\label{e3.8}
\nu_\varphi^\infty\leq\sum_{j=1}^q\omega_j\min\left\{\nu^f_{D_j},H_V(d)-1\right\}
\end{equation}
outside an analytic subset of codimension at least $2$.
As a matter of fact, when $\zeta\in M\setminus\mathtt{I}_f$ is a zero of some $Q_j(\mathfrak{f})$, it can be a zero of no more than $k+1$ functions $Q_j(\mathfrak{f})$ by \eqref{e1.5}.
Assume $Q_j(\mathfrak{f})$ vanishes at $\zeta$ for $j\in\tilde{\mathcal{R}}\subseteq\left\{1,2,\ldots,q\right\}$ with $\#\tilde{\mathcal{R}}=k+1$ yet $Q_s(\mathfrak{f})(\zeta)\neq0$ for $s\in\left\{1,2,\ldots,q\right\}\setminus\tilde{\mathcal{R}}$.
By virtue of Proposition \ref{p2.1} - Part {\bf(c.)}, putting $\tilde{E}_j:=\exp\left(\max\left\{\nu^f_{D_j}(\zeta)-H_V(d)+1,0\right\}\right)\geq1$ for $j\in\tilde{\mathcal{R}}$, there exists a subset $\tilde{T}$ of $\tilde{R}$ with $\#\tilde{T}=\operatorname{rank}\left\{Q_j\right\}_{j\in\tilde{T}}=n+1$ such that
\begin{equation*}
\sum_{j\in\tilde{\mathcal{R}}}\omega_j\max\left\{\nu^f_{D_j}(\zeta)-H_V(d)+1,0\right\}
\leq\sum_{j\in\tilde{\mathcal{T}}}\max\left\{\nu^f_{D_j}(\zeta)-H_V(d)+1,0\right\},
\end{equation*}
from which it follows that, in view of $\nu_{W_{\alpha^1\cdots\alpha^{H_V(d)}}(F)}^0=\nu_{W_{\alpha^1\cdots\alpha^{H_V(d)}}(F_{\tilde{\mathcal{T}}})}^0$,
\begin{equation*}
\sum_{j\in\tilde{\mathcal{R}}}\omega_j\max\left\{\nu^f_{D_j}(\zeta)-H_V(d)+1,0\right\}\leq\nu_{W_{\alpha^1\cdots\alpha^{H_V(d)}}(F)}^0(\zeta).
\end{equation*}
This estimate clearly leads to \eqref{e3.8} upon verifying the following computations at $\zeta$
\begin{equation*}
\begin{split}
&\,\nu_\varphi^\infty\leq\sum_{j=1}^q\omega_j\hspace{0.2mm}\nu^f_{D_j}-\nu_{W_{\alpha^1\cdots\alpha^{H_V(d)}}(F)}^0
=\sum_{j\in\tilde{\mathcal{R}}}\omega_j\hspace{0.2mm}\nu^f_{D_j}-\nu_{W_{\alpha^1\cdots\alpha^{H_V(d)}}(F)}^0\\
=&\sum_{j\in\tilde{\mathcal{R}}}\omega_j\left(\min\left\{\nu^f_{D_j},H_V(d)-1\right\}+\max\left\{\nu^f_{D_j}-H_V(d)+1,0\right\}\right)
-\nu_{W_{\alpha^1\cdots\alpha^{H_V(d)}}(F)}^0\\
\leq&\sum_{j\in\tilde{\mathcal{R}}}\omega_j\min\left\{\nu^f_{D_j},H_V(d)-1\right\}\leq\sum_{j=1}^q\omega_j\min\left\{\nu^f_{D_j},H_V(d)-1\right\}.
\end{split}
\end{equation*}

Next, we suppose that
\begin{equation}\label{e3.9}
\sum_{j=1}^q\omega_j\,\delta_{H_V(d)-1}^f(D_j)\leq H_V(d)+\frac{\rho}{d}\,H_V(d)\left(H_V(d)-1\right).
\end{equation}
When \eqref{e3.9} is true, then by \eqref{e1.2} and the first relation in \eqref{e2.5}, it yields that
\begin{equation*}
\sum_{j=1}^q\eta_j\,\omega_j\geq\varpi\left(q-2k+n-1\right)+n+1-H_V(d)-\frac{\rho}{d}\,H_V(d)\left(H_V(d)-1\right)
\end{equation*}
for all nonnegative constants $\eta_j\in\mathcal{A}\left(D_j,H_V(d)-1\right)$; that is,
\begin{equation*}
\sum_{j=1}^q\eta_j\geq q-2k+n-1+\frac{1}{\varpi}\left\{n+1-H_V(d)-\frac{\rho}{d}\,H_V(d)\left(H_V(d)-1\right)\right\}.
\end{equation*}
This further implies that
\begin{equation*}
\sum_{j=1}^q\left(1-\eta_j\right)\leq2k-n+1+\frac{1}{\varpi}\left\{H_V(d)-n-1+\frac{\rho}{d}\,H_V(d)\left(H_V(d)-1\right)\right\},
\end{equation*}
which, along with the lower bound in the second estimate of \eqref{e2.5}, leads to \eqref{e1.6}.

In the sequel, we show by contradiction the validity of \eqref{e3.9}.

Suppose it doesn't hold.
Then, by definition of non-integrated defect, there are nonnegative constants $\tilde{\eta}_j\in\mathcal{A}\left(D_j,H_V(d)-1\right)$ and continuous, pluri-subharmonic functions $\tilde{\mathtt{u}}_j\not\equiv-\infty$, for every $j=1,2,\ldots,q$, such that $e^{\tilde{\mathtt{u}}_j}\n{\psi_j}\leq\nm{\mathfrak{f}}^{d\tilde{\eta}_j}$ and
\begin{equation}\label{e3.10}
\sum_{j=1}^q\left(1-\tilde{\eta}_j\right)\omega_j>H_V(d)+\frac{\rho}{d}\,H_V(d)\left(H_V(d)-1\right).
\end{equation}
Here, $\psi_j$ is a nonzero holomorphic function that satisfies $\nu^0_{\psi_j}=\min\left\{\nu^f_{D_j},H_V(d)-1\right\}$.
Define $\mathtt{u}_j:=\tilde{\mathtt{u}}_j+\log\n{\psi_j}\not\equiv-\infty$ that is continuous and pluri-subharmonic, and satisfies $e^{\mathtt{u}_j}\leq\nm{\mathfrak{f}}^{d\tilde{\eta}_j}$.
So, for $\vartheta_1(z):=\log\n{z^{\boldsymbol{\alpha}}\varphi(z)}+\sum_{j=1}^q\omega_j\mathtt{u}_j(z)$ with $\boldsymbol{\alpha}:=\sum_{l=1}^{H_V(d)}\alpha^l\in\mathbb{Z}^m_{\geq0}$, seeing the preceding analyses and \eqref{e3.8}, one clearly deduces that $\vartheta_1$ is pluri-subharmonic on $M$.

Note we assume the {\bf condition $\mathbf{C}(\rho)$} is satisfied; that is, \eqref{e1.3} holds.
By \cite[p252, Remark]{Fu3}, there exists a continuous, pluri-subharmonic function $\vartheta_2\not\equiv-\infty$ such that $e^{\vartheta_2}\hspace{0.2mm}dV\leq\nm{\mathfrak{f}}^{2\rho}\upsilon_m$.
Here, and henceforth, we use $dV$ to denote the canonical volume form on $M$.

Set $t_0:=\frac{2\rho}{d\left\{\sum_{j=1}^q\left(1-\tilde{\eta}_j\right)\omega_j-H_V(d)\right\}}>0$ and write $\theta:=\vartheta_2+t_0\vartheta_1$.
Then, $\theta$ is pluri-subharmonic and thus a subharmonic function on $M=B(1)$.
In addition, one has
\begin{equation*}
\begin{split}
&\,e^\theta\hspace{0.2mm}dV=e^{\vartheta_2+t_0\vartheta_1}\hspace{0.2mm}dV\leq e^{t_0\vartheta_1}\nm{\mathfrak{f}}^{2\rho}\upsilon_m
=\n{z^{\boldsymbol{\alpha}}\varphi}^{t_0}e^{t_0\sum_{j=1}^q\omega_j\mathtt{u}_j}\nm{\mathfrak{f}}^{2\rho}\upsilon_m\\
\leq&\n{z^{\boldsymbol{\alpha}}\varphi}^{t_0}\nm{\mathfrak{f}}^{t_0d\sum_{j=1}^q\omega_j\tilde{\eta}_j+2\rho}\upsilon_m
=\n{z^{\boldsymbol{\alpha}}\varphi}^{t_0}\nm{\mathfrak{f}}^{t_0d\left\{\sum_{j=1}^q\omega_j-H_V(d)\right\}}\upsilon_m.
\end{split}
\end{equation*}
By \eqref{e3.4} and \eqref{e3.10}, we easily get $t_0\left(\sum_{l=1}^{H_V(d)}\n{\alpha^l}\right)<\varsigma<1$ for some constant $\varsigma>0$.
Therefore, recalling $\upsilon_m=2m\nm{z}^{2m-1}\sigma_m\wedge d\nm{z}$ and \eqref{e3.7}, we have
\begin{equation}\label{e3.11}
\begin{split}
&\,\int_Me^\theta\hspace{0.2mm}dV\leq\int_M\n{z^{\boldsymbol{\alpha}}}^{t_0}
\n{\varphi(z)\nm{\mathfrak{f}(z)}^{d\left\{\sum_{j=1}^q\omega_j-H_V(d)\right\}}}^{t_0}\upsilon_m\\
\leq&\,K\sum_{\mathcal{R},\mathcal{T}}\int_0^1r^{2m-1}\left(\int_{S(r)}\n{z^{\boldsymbol{\alpha}}\aleph(F_{\mathcal{T}})(z)}^{t_0}\sigma_m\right)dr\\
\leq&\,K\int_0^1r^{2m-1}\left(\frac{R^{2m-1}}{R-r}\hspace{0.2mm}T_F(R,r_0)\right)^\varsigma dr
\leq K\int_0^1\left(\frac{1}{R-r}\hspace{0.2mm}T_F(R,r_0)\right)^\varsigma dr
\end{split}
\end{equation}
for $r_0<r<R<1$, where we used \cite[Proposition 6.1]{Fu3} (see also \cite[Proposition 3.3]{RS1}) for the derivation of the third, or second last, estimate in \eqref{e3.11}.

Finally, seeing Hayman \cite[Lemma 2.4 (ii)]{Hwk} and letting $R=r+\frac{1-r}{e\hspace{0.2mm}T_F(r,r_0)}$, one has
\begin{equation*}
T_F(R,r_0)\leq2\hspace{0.2mm}T_F(r,r_0)\leq2d\hspace{0.2mm}T_f(r,r_0)
\end{equation*}
outside a set with finite logarithmic measure.
Recall we assumed the case $\limsup\limits_{r\to1}\frac{T_f(r,r_0)}{\log\frac{1}{1-r}}<\infty$ in \eqref{e3.2}.
The preceding analyses combined with \cite[Proposition 5.5]{Fu2} yields that
\begin{equation}\label{e3.12}
\begin{split}
&\,\int_Me^\theta\hspace{0.2mm}dV\leq K\int_0^1\left(\frac{2}{\frac{1-r}{e\hspace{0.2mm}T_F(r,r_0)}}\hspace{0.2mm}T_F(r,r_0)\right)^\varsigma dr
\leq K\int_0^1\left(\frac{2\hspace{0.2mm}d^2\hspace{0.2mm}e}{1-r}\hspace{0.2mm}T^2_f(r,r_0)\right)^\varsigma dr\\
\leq&\,K\int_0^1\frac{1}{\left(1-r\right)^\varsigma}\left(\log\frac{1}{1-r}\right)^{2\varsigma}dr
=\frac{K}{\left(1-\varsigma\right)^{2\varsigma+1}}\,\Gamma(2\varsigma+1)<\infty.
\end{split}
\end{equation}

This result however would contradict Yau \cite{Yst} and Karp \cite[Theorem B]{Ka}, as $M=B(1)$ has infinite volume with respect to the given complete K\"{a}hler metric; see \cite[p1147]{RS1}.

From now on, we shall consider the latter case in \eqref{e3.2} and the situation when the universal covering of $M$ is biholomorphic to $\mathbb{C}^m$ simultaneously, since both may be treated essentially in the same way through traditional defect relation and \eqref{e2.4}.
As can be seen from the following discussions, we don't need $f$ to satisfy the growth {\bf condition $\mathbf{C}(\rho)$} in these settings.

Noting the description below \eqref{e3.2}, we without loss of generality assume $M=B(R_0)$ for some $0<R_0\leq\infty$ afterwards.
Moreover, when $R_0=\infty$, we can use the flat metric to see $\operatorname{Ric}\boldsymbol{\omega}\equiv0$; that is, all meromorphic maps $f:\mathbb{C}^m\to V$ satisfy the {\bf condition $\mathbf{C}(0)$} automatically.

\begin{prop}\label{p3.2}
Under the same hypotheses of Theorem \ref{t1.2} concerning the algebraic variety $V$ in $\mathbb{P}^N(\mathbb{C})$ and the hypersurfaces $D_1,D_2,\ldots,D_q$ in $\mathbb{P}^N(\mathbb{C})$, let $f:B(R_0)\left(\subseteq\mathbb{C}^m\right)\to V$ be an algebraically non-degenerate meromorphic map with $0<R_0\leq\infty$.
Then, one has
\begin{equation}\label{e3.13}
\left\{q-\frac{2k-n+1}{n+1}H_V(d)\right\}T_f(r,r_0)\leq\sum_{j=1}^q\frac{1}{d_j}N_f^{H_V(d)-1}(r,r_0;D_j)+S_f(r,r_0),
\end{equation}
where $S_f(r,r_0)\geq0$ satisfies $S_f(r,r_0)\leq K\left\{\log^+T_f(r,r_0)+\log^+r\right\}$ for all $r\in\left(r_0,\infty\right)$ outside a set of finite linear measure when $R_0=\infty$ and
\begin{equation}\label{e3.14}
S_f(r,r_0)\leq K\log^+T_f(r,r_0)+\frac{2k-n+1}{2\hspace{0.2mm}d\left(n+1\right)}H_V(d)\left(H_V(d)-1\right)\log^+\frac{1}{R_0-r}
\end{equation}
for all $r\in\left(r_0,R_0\right)$ outside a set of finite logarithmic measure when $R_0<\infty$.
\end{prop}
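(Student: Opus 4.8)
The plan is to read \eqref{e3.13} as a second main theorem, obtained by integrating the pointwise bound \eqref{e3.7} over the spheres $S(r)$ and converting the boundary integrals into Nevanlinna functions. \textbf{Step 1 (logarithmic integration).} I would take logarithms in \eqref{e3.7} and integrate against $\sigma_m$ over $S(r)$. Through the definition $\varphi=W_{\alpha^1\cdots\alpha^{H_V(d)}}(F)/\bigl(Q_1^{\omega_1}(\mathfrak{f})\cdots Q_q^{\omega_q}(\mathfrak{f})\bigr)$ the left-hand side splits into $\int_{S(r)}\log\nm{\mathfrak{f}}\,\sigma_m$, $\int_{S(r)}\log\n{W_{\alpha^1\cdots\alpha^{H_V(d)}}(F)}\,\sigma_m$ and $-\sum_j\omega_j\int_{S(r)}\log\n{Q_j(\mathfrak{f})}\,\sigma_m$, where the $Q_j$ are the common-degree forms $Q_j^{d/d_j}$. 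By \eqref{e2.2} the first integral equals $T_f(r,r_0)+O(1)$, while Jensen's formula (the first main theorem on $B(R_0)$, using $dd^c\log\n{g}^2=[\nu_g^0]$) identifies the latter two, up to $O(1)$, with the \emph{untruncated} counting functions $N_{W}(r,r_0)$ of the Wronskian and $N_f(r,r_0;D_j)$ of the divisors $\nu^f_{D_j}$.

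\textbf{Step 2 (error term).} The right-hand side $\int_{S(r)}\log^+\sum_{\mathcal{R},\mathcal{T}}\n{\aleph(F_{\mathcal{T}})}\,\sigma_m$ is a finite sum of logarithmic-derivative quantities, each $\aleph(F_{\mathcal{T}})$ being a Wronskian divided by the product of its entries. I would bound it by Fujimoto's logarithmic derivative estimate \cite[Proposition 6.1]{Fu3} (equivalently \cite[Proposition 3.3]{RS1}), combined with the concavity of $\log$, the growth lemma \cite[Proposition 5.5]{Fu2} and Hayman's covering lemma \cite[Lemma 2.4]{Hwk} to control the exceptional set, exactly as in the passage leading to \eqref{e3.11}. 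Using $T_F(r,r_0)=d\,T_f(r,r_0)+O(1)$ (since $F$ is built from degree-$d$ forms), this yields the term $S_f(r,r_0)$ with $\log^+T_f$ in place of $\log^+T_F$; the coefficient of $\log^+\frac{1}{R_0-r}$ in \eqref{e3.14} originates from the total derivative order $\sum_{l=1}^{H_V(d)}\n{\alpha^l}\leq\frac12 H_V(d)\bigl(H_V(d)-1\bigr)$ recorded in \eqref{e3.4}, scaled by the normalizing factors of Step 4, while the $\log^+r$ term (when $R_0=\infty$) comes from the monomial $z^{\boldsymbol{\alpha}}$.

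\textbf{Step 3 (truncation) and Step 4 (Nochka conversion).} Rearranging Steps 1--2 produces a weighted inequality of the form $d\bigl(\sum_j\omega_j-H_V(d)\bigr)T_f(r,r_0)\leq\sum_j\omega_j N_f(r,r_0;D_j)-N_{W}(r,r_0)+S_f(r,r_0)$. Integrating the divisor estimate \eqref{e3.8}, already proved pointwise, replaces the combination $\sum_j\omega_j N_f-N_{W}$ by the truncated sum $\sum_j\omega_j N_f^{H_V(d)-1}(r,r_0;D_j)$. I would then divide by $\varpi$ and invoke Proposition \ref{p2.1}: the identity $\sum_j\omega_j=\varpi(q-2k+n-1)+n+1$ together with $\frac{n+1}{2k-n+1}\leq\varpi$ and $\omega_j\leq\varpi$ from \eqref{e2.5} turns $\frac{1}{\varpi}\bigl(\sum_j\omega_j-H_V(d)\bigr)$ into a lower bound $q-\frac{2k-n+1}{n+1}H_V(d)$ (here $H_V(d)\geq n+1$ is used) and replaces each $\frac{\omega_j}{\varpi}$ by $1$. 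Dividing by $d$ and passing from the common-degree forms $Q_j^{d/d_j}$ back to the original degrees $d_j$ --- which only decreases the truncated counting, because $\frac1d\min\{\frac{d}{d_j}\nu^f_{D_j},H_V(d)-1\}\leq\frac{1}{d_j}\min\{\nu^f_{D_j},H_V(d)-1\}$ for $d\geq d_j$ --- produces the factor $\frac{1}{d_j}$ and gives \eqref{e3.13}; the surviving $\frac{1}{d\varpi}$ in front of $S_f$, with $\frac{1}{\varpi}\leq\frac{2k-n+1}{n+1}$, accounts exactly for the coefficient $\frac{2k-n+1}{2d(n+1)}H_V(d)\bigl(H_V(d)-1\bigr)$ in \eqref{e3.14}.

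The main obstacle is Step 2: securing the logarithmic derivative estimate with the \emph{precise} coefficient of $\log^+\frac{1}{R_0-r}$ demanded by \eqref{e3.14} and with an exceptional set of finite logarithmic (resp. linear) measure, uniformly over the finitely many pairs $(\mathcal{R},\mathcal{T})$; the remaining steps are bookkeeping built on the algebraic inequalities \eqref{e3.7}--\eqref{e3.8} and the Nochka weights of Proposition \ref{p2.1}.
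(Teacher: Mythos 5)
Your proposal is correct and follows essentially the same route as the paper's own proof: integrate the Nochka-weighted pointwise bound \eqref{e3.7} over spheres using \cite[Proposition 6.1]{Fu3} together with the concavity of the logarithm, convert the boundary terms via Jensen's formula and the divisor estimate \eqref{e3.8}, and then apply the weight identities \eqref{e2.5} (with $H_V(d)\geq n+1$ and $d_j\leq d$) to arrive at \eqref{e3.13} with the error term \eqref{e3.14}, deferring the exceptional-set bookkeeping to \cite[Lemma 2.4]{Hwk} and \cite[Proposition 6.2]{Fu3}. The only cosmetic difference is that the paper applies Jensen directly to $z^{\boldsymbol{\alpha}}\varphi$ and bounds $\nu_\varphi^\infty$ by \eqref{e3.8}, rather than first producing untruncated counting functions and truncating afterwards as you do.
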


\begin{proof}
Like the first case in \eqref{e3.2}, by \eqref{e3.4}, \eqref{e3.7} and the argument in \eqref{e3.11}, one has
\begin{equation*}
\int_{S(r)}\n{z^{\boldsymbol{\alpha}}\varphi(z)\nm{\mathfrak{f}(z)}^{d\left\{\sum_{j=1}^q\omega_j-H_V(d)\right\}}}^{\check{t}_0}\sigma_m
\leq K\left(\frac{R^{2m-1}}{R-r}\hspace{0.2mm}T_F(R,r_0)\right)^{\check{\varsigma}}
\end{equation*}
for $r_0<r<R<R_0$, which further implies that, applying the concavity of logarithm,
\begin{equation}\label{e3.15}
\begin{split}
&\,\int_{S(r)}\log\n{z^{\boldsymbol{\alpha}}}\sigma_m+\int_{S(r)}\log\n{\varphi}\sigma_m
+\int_{S(r)}\log\nm{\mathfrak{f}}^{d\left\{\sum_{j=1}^q\omega_j-H_V(d)\right\}}\sigma_m\\
\leq&\,\frac{\check{\varsigma}}{\check{t}_0}\log^+\frac{1}{R-r}+K\left\{\log^+T_F(R,r_0)+\log^+R\right\}.
\end{split}
\end{equation}
Here, $\check{t}_0,\check{\varsigma}>0$ are arbitrarily given constants satisfying $\check{t}_0\left(\sum_{l=1}^{H_V(d)}\n{\alpha^l}\right)<\check{\varsigma}<1$.
Besides, use Jensen's formula and \eqref{e3.8} to derive that
\begin{equation*}
\int_{S(r)}\log\n{z^{\boldsymbol{\alpha}}\times\varphi(z)}\sigma_m\geq-\sum_{j=1}^q\omega_jN_f^{H_V(d)-1}(r,r_0;D_j)+O\left(1\right),
\end{equation*}
which combined with the first relation in \eqref{e2.5} and \eqref{e3.15} altogether leads to
\begin{equation*}
\begin{split}
&\left\{\varpi\left(q-2k+n-1\right)+n+1-H_V(d)\right\}T_f(r,r_0)\leq\sum_{j=1}^q\frac{\omega_j}{d}N_f^{H_V(d)-1}(r,r_0;D_j)\\
&+\frac{H_V(d)\left(H_V(d)-1\right)}{2\hspace{0.2mm}d}\log^+\frac{1}{R-r}+K\left\{\log^+T_F(R,r_0)+\log^+R\right\}
\end{split}
\end{equation*}
when $\frac{\check{\varsigma}}{\check{t}_0}$ approaches $\frac{H_V(d)\left(H_V(d)-1\right)}{2}$ from the above.
Since $d_j\leq d$, $\omega_j\leq\varpi$ and $\frac{1}{\varpi}\leq\frac{2k-n+1}{n+1}$, \eqref{e3.13} follows immediately from the above inequality with
\begin{equation*}
S_f(r,r_0):=\frac{1}{2\hspace{0.2mm}d\hspace{0.2mm}\varpi}H_V(d)\left(H_V(d)-1\right)\log^+\frac{1}{R-r}+K\left\{\log^+T_F(R,r_0)+\log^+R\right\}.
\end{equation*}
The remaining estimates about $S_f(r,r_0)$ appear to be exactly the same as those, for instance, in \cite[Proposition 6.2]{Fu3} or \cite[Theorem 4.5]{RS1} by virtue of \cite[Lemma 2.4]{Hwk}.
\end{proof}

A natural consequence of Proposition \ref{p3.2} is the standard defect relation
\begin{equation}\label{e3.16}
\sum_{j=1}^q\hat{\delta}_{H_V(d)-1}^f(D_j)\leq\frac{2k-n+1}{n+1}H_V(d),
\end{equation}
provided either $R_0=\infty$ and $f$ is transcendental\footnote{Notice when $f$ is rational, then one can choose $S_f(r,r_0)=O\left(1\right)$ to have \eqref{e3.16}.}, or $R_0<\infty$ and $\limsup\limits_{r\to R_0}\frac{T_f(r,r_0)}{\log\frac{1}{R_0-r}}=\infty$.
Thus, \eqref{e1.6} follows from \eqref{e2.4} and \eqref{e3.16} so that our proof is finished completely.

\section{Some Related Uniqueness Results}\label{Uni} 
\noindent In 1986, Fujimoto \cite{Fu4} generalized the well-known five-value theorem of Nevanlinna to the situation of meromorphic maps over a complete, connected K\"{a}hler manifold $M$, whose universal covering is biholomorphic to a finite ball in $\mathbb{C}^m$, into $\mathbb{P}^n(\mathbb{C})$ that satisfy the growth {\bf condition $\mathbf{C}(\rho)$} and share hyperplanes; other closely related results can be found in \cite{RS2,Ya}.

In this last section, under the same setting as this result of Fujimoto, we use the techniques in the proof of Theorem \ref{t1.2} to describe two uniqueness results regarding hypersurfaces located in $k$-subgeneral position, following essentially the approach applied in \cite{Fu4,RS2,Ya}.

Considering the comments made in \cite[Section 5]{Fu4}, we will without loss of generality suppose that either $M=B(1)\subseteq\mathbb{C}^m$ (finite ball covering of $M$) or $M=\mathbb{C}^m$ subsequently.

In fact, when $f,g:M\to V$ are the given meromorphic maps, then $f\circ\pi,g\circ\pi:\widetilde{M}\to V$ will satisfy all the hypotheses as meromorphic maps over the lifted, complete universal covering $\widetilde{M}$ of $M$.
Since $f\circ\pi\equiv g\circ\pi$ on $\widetilde{M}$ implies $f\equiv g$ on $M$, we simply assume $M=\widetilde{M}$.

\begin{thm}\label{t4.1}
Assume $V\subseteq\mathbb{P}^N(\mathbb{C})$ is an irreducible projective algebraic variety of dimension $n\left(\leq N\right)$.
Let $f,g:B(1)\left(\subseteq\mathbb{C}^m\right)\to V$ be two algebraically non-degenerate meromorphic maps, both satisfying the {\bf condition $\mathbf{C}(\rho)$}.
Let $D_1,D_2,\ldots,D_q$ be $q$ hypersurfaces in $\mathbb{P}^N(\mathbb{C})$ of degrees $d_1,d_2,\ldots,d_q$, located in $k$-subgeneral position $\left(k\geq n\right)$ with respect to $V$.
Suppose further that $\limsup\limits_{r\to1}\frac{T_f(r,r_0)+T_g(r,r_0)}{\log\frac{1}{1-r}}<\infty$ and $f,g$ satisfy the following conditions
\vskip2pt\noindent {\bf(1.)} $f^{-1}(D_j)=g^{-1}(D_j)$ for $j=1,2,\ldots,q$,
\vskip2pt\noindent {\bf(2.)} $f=g$ on $\bigcup\limits_{j=1}^qf^{-1}(D_j)$,
\vskip2pt\noindent {\bf(3.)} $f^{-1}(D_j\cap D_{j^{\prime}})$ has dimension at most $m-2$ for $1\leq j\neq j^{\prime}\leq q$.
\vskip2pt\noindent Then, one has $f\equiv g$ provided, for the least common multiple $d$ of $d_1,d_2,\ldots,d_q$,
\begin{equation}\label{e4.1}
q>\frac{2k-n+1}{n+1}\left\{H_V(d)+\frac{\rho}{d}\,H_V(d)\left(H_V(d)-1\right)\right\}+\frac{2}{d}\left(H_V(d)-1\right).
\end{equation}
\end{thm}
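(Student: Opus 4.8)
Suppose, for contradiction, that $f\not\equiv g$. As already reduced in the text we may take $M=B(1)$, and the standing hypothesis $\limsup_{r\to1}\frac{T_f(r,r_0)+T_g(r,r_0)}{\log\frac1{1-r}}<\infty$ places us precisely in the first (finite-growth) case of the proof of Theorem \ref{t1.2}, where the volume estimate based on Yau \cite{Yst} and Karp \cite[Theorem B]{Ka} applies and {\bf condition $\mathbf{C}(\rho)$} is available for both maps. Writing $Q_1,\dots,Q_q\in\mathcal{H}_d$ for the degree-$d$ normalised defining polynomials and $\mathfrak{f},\mathfrak{g}$ for fixed reduced representations, I would first produce indices $j_0\neq l_0$ for which the holomorphic function
\[
\Phi:=Q_{j_0}(\mathfrak{f})\,Q_{l_0}(\mathfrak{g})-Q_{l_0}(\mathfrak{f})\,Q_{j_0}(\mathfrak{g})\not\equiv0 .
\]
Were the analogue of $\Phi$ to vanish identically for every pair, the meromorphic functions $Q_j(\mathfrak{f})/Q_j(\mathfrak{g})$ would all coincide, so that $\br{Q_1(\mathfrak{f}):\cdots:Q_q(\mathfrak{f})}\equiv\br{Q_1(\mathfrak{g}):\cdots:Q_q(\mathfrak{g})}$; since $f$ is algebraically non-degenerate and the $D_j$ lie in $k$-subgeneral position with respect to $V$ (so the degree-$d$ linear system they cut out separates points of $V$), this would force $f\equiv g$, a contradiction.

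The two facts that drive the argument, and that are absent from Theorem \ref{t1.2}, concern the divisor of $\Phi$. First, by conditions {\bf(1.)}--{\bf(3.)}: on the regular part of each $f^{-1}(D_i)$ one has $f=g$, hence $\mathfrak{f}$ and $\mathfrak{g}$ are proportional there and $\Phi$ vanishes; condition {\bf(3.)} guarantees that the sets $f^{-1}(D_i)$ are pairwise disjoint in codimension one, so the zeros add up and
\[
\nu^0_\Phi\ \geq\ \sum_{i=1}^q\min\bre{\nu^f_{D_i},1}
\]
outside an analytic subset of codimension at least $2$. Second, by \eqref{e3.3} (and its analogue for $g$) one has $\n{\Phi}\leq K\nm{\mathfrak{f}}^d\nm{\mathfrak{g}}^d$, so $\mathtt{H}:=\n{\Phi}\,\nm{\mathfrak{f}}^{-d}\,\nm{\mathfrak{g}}^{-d}$ is a bounded, nonnegative, continuous function, and the Poincar\'{e}--Lelong formula yields the current identity
\[
d\,\Omega_f+d\,\Omega_g+dd^c\log\mathtt{H}^2=\br{\nu^0_\Phi}\ \geq\ \sum_{i=1}^q\br{\min\bre{\nu^f_{D_i},1}} .
\]
Multiplying by $H_V(d)-1$ and using $\min\{\nu,H_V(d)-1\}\leq(H_V(d)-1)\min\{\nu,1\}$ converts this, at the truncation level $H_V(d)-1$ used by the defect relation, into a single-potential bound for the combined divisors of $f$ and $g$, carrying coefficient $(H_V(d)-1)d$ on each of $\Omega_f$ and $\Omega_g$.

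With these in hand I would mirror the first case of the proof of Theorem \ref{t1.2}, run now jointly for $f$ and $g$: form the Wronskian quotients and the associated pluri-subharmonic functions as in \eqref{e3.7}--\eqref{e3.8}, invoke {\bf condition $\mathbf{C}(\rho)$} (for $f$, and for $g$ to control the $\Omega_g$ contribution) to obtain $\vartheta_2$ with $e^{\vartheta_2}\,dV\leq\nm{\mathfrak{f}}^{2\rho}\nm{\mathfrak{g}}^{2\rho}\,\upsilon_m$, and fold in the bounded factor $\mathtt{H}$ supplied above. Using the standing hypothesis \eqref{e4.1}, one can then choose the exponent $t_0>0$ so that the resulting continuous pluri-subharmonic function $\theta$ satisfies $\int_Me^\theta\,dV<\infty$, exactly as in \eqref{e3.11}--\eqref{e3.12} via \cite[Proposition 6.1]{Fu3} and Hayman \cite[Lemma 2.4]{Hwk} together with the growth hypothesis. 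Since $M=B(1)$ has infinite volume for the complete K\"{a}hler metric, this contradicts \cite{Yst} and \cite[Theorem B]{Ka}, forcing $f\equiv g$. I expect the main obstacle to be the constant bookkeeping in this last step: one must verify that the extra vanishing of $\Phi$ lowers the admissible threshold by \emph{precisely} $\frac2d(H_V(d)-1)$ — the factor $2$ arising from the simultaneous appearance of $\Omega_f$ and $\Omega_g$ in the identity above, and the factor $\frac1d(H_V(d)-1)$ from passing between the truncation $1$ natural for $\Phi$ and the truncation $H_V(d)-1$ natural for the Wronskian — and that $\theta$ remains genuinely pluri-subharmonic throughout. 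A secondary point needing care is the separation property of the degree-$d$ system used to guarantee $\Phi\not\equiv0$.
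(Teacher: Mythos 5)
Your overall architecture is the same as the paper's: build a holomorphic auxiliary function vanishing on $\bigcup_{j}f^{-1}(D_j)$, raise it to a suitable power to cancel the poles of the Wronskian quotients $\varphi,\tilde{\varphi}$, invoke the {\bf condition $\mathbf{C}(\rho)$} for both maps, and contradict the Yau--Karp infinite-volume theorem on $M=B(1)$. The first genuine gap is your non-vanishing claim for $\Phi:=Q_{j_0}(\mathfrak{f})Q_{l_0}(\mathfrak{g})-Q_{l_0}(\mathfrak{f})Q_{j_0}(\mathfrak{g})$. The $k$-subgeneral position hypothesis \eqref{e1.5} does \emph{not} imply that $x\mapsto\br{Q_1(x):\cdots:Q_q(x)}$ separates points of $V$: for instance, on $V=\mathbb{P}^1$ the quadrics $Q_j=w_0^2+j^2w_1^2$, $j=1,\ldots,q$, have pairwise disjoint zero sets (hence are in general position), yet all are invariant under the involution $\sigma:\br{w_0:w_1}\mapsto\br{w_0:-w_1}$; taking $g=\sigma\circ f$ with $f$ nonconstant, every pairwise determinant $Q_j(\mathfrak{f})Q_l(\mathfrak{g})-Q_l(\mathfrak{f})Q_j(\mathfrak{g})$ vanishes identically although $f\not\equiv g$ and both maps are algebraically non-degenerate. (Such a pair violates condition {\bf(2.)}, but that does not rescue your step: as stated, your claim uses only non-degeneracy and subgeneral position, and it is false.) The paper avoids this issue entirely by taking $\chi:=\mathtt{f}_\imath\mathtt{g}_{\breve{\imath}}-\mathtt{f}_{\breve{\imath}}\mathtt{g}_\imath$ built from the \emph{coordinates}: $f\not\equiv g$ means precisely that some $2\times2$ minor of the coordinate matrix of $\mathfrak{f},\mathfrak{g}$ is $\not\equiv0$, so no separation property is needed.

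The second gap is quantitative, and it means hypothesis \eqref{e4.1} is not sufficient for your argument when $d>1$. Conditions {\bf(2.)}--{\bf(3.)} give only simple vanishing of the auxiliary function along each $f^{-1}(D_i)$ outside codimension two, so for your $\Phi$ (just as for the paper's $\chi$) the usable lower bound is $\nu^0_\Phi\geq\sum_{i}\min\bre{\nu^f_{D_i},1}$, and hence the exponent required to cancel $\nu_\varphi^\infty+\nu_{\tilde{\varphi}}^\infty\leq2\varpi\left(H_V(d)-1\right)\sum_i\min\bre{\nu^f_{D_i},1}$ is $2\varpi\left(H_V(d)-1\right)$ in either case. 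But $\n{\Phi}\leq K\nm{\mathfrak{f}}^d\nm{\mathfrak{g}}^d$, whereas $\n{\chi}\leq2\nm{\mathfrak{f}}\nm{\mathfrak{g}}$: your compensating factor $\n{\Phi}^{2\varpi\left(H_V(d)-1\right)}$ costs $\left(\nm{\mathfrak{f}}\nm{\mathfrak{g}}\right)^{2d\varpi\left(H_V(d)-1\right)}$, a full factor $d$ more in the exponent than the paper pays. Chasing this through the choice of the exponent $\hat{t}_0$ and the integrability constraint $\hat{t}_0H_V(d)\left(H_V(d)-1\right)<1$ (the analogue of \eqref{e4.3}), your route requires $\sum_{j}\omega_j>H_V(d)+2\varpi\left(H_V(d)-1\right)+\frac{\rho}{d}H_V(d)\left(H_V(d)-1\right)$, which via \eqref{e2.5} translates into $q>\frac{2k-n+1}{n+1}\left\{H_V(d)+\frac{\rho}{d}H_V(d)\left(H_V(d)-1\right)\right\}+2\left(H_V(d)-1\right)$ --- strictly stronger than \eqref{e4.1} for $d>1$. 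The factor $\frac{1}{d}$ in \eqref{e4.1} is not truncation bookkeeping to be recovered later: it exists precisely because $\chi$ has degree one in the coordinates of $\mathfrak{f}$ and $\mathfrak{g}$, and it is irretrievably lost once the auxiliary function is built from the degree-$d$ polynomials $Q_j$. (Your Poincar\'{e}--Lelong formulation of the divisor bound is fine and equivalent to the paper's pluri-subharmonic-function bookkeeping; that part is not the problem.)
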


\begin{proof}
Assume $f=\left[\mathtt{f}_0:\mathtt{f}_1:\cdots:\mathtt{f}_N\right]$ and $g=\left[\mathtt{g}_0:\mathtt{g}_1:\cdots:\mathtt{g}_N\right]$, with reduced representations $\mathfrak{f}=\left(\mathtt{f}_0,\mathtt{f}_1,\ldots,\mathtt{f}_N\right)$ and $\mathfrak{g}=\left(\mathtt{g}_0,\mathtt{g}_1,\ldots,\mathtt{g}_N\right)$.
Suppose in the following $f\not\equiv g$.
Then, there exist at least two distinct indices $0\leq\imath\neq\breve{\imath}\leq N$ such that the holomorphic function
$\chi:=\mathtt{f}_\imath\mathtt{g}_{\breve{\imath}}-\mathtt{f}_{\breve{\imath}}\mathtt{g}_\imath$ is not identically zero and satisfies $\n{\chi}\leq2\nm{\mathfrak{f}}\nm{\mathfrak{g}}$ on $M=B(1)$.

Employ the previous notations to have $F$ as before and $G:=\left[\phi_1(\mathfrak{g}):\phi_2(\mathfrak{g}):\cdots:\phi_{H_V(d)}(\mathfrak{g})\right]$, both being linearly non-degenerate maps to $\mathbb{P}^{H_V(d)-1}(\mathbb{C})$.
Thus, one finds two sets of $H_V(d)$ $m$-tuples $\alpha^l,\tilde{\alpha}^l\in\mathbb{Z}^m_{\geq0}$ with \eqref{e3.4} satisfied for each one, and
$W_{\alpha^1\cdots\alpha^{H_V(d)}}(F)\times W_{\tilde{\alpha}^1\cdots\tilde{\alpha}^{H_V(d)}}(G)\not\equiv0$ with
$W_{\tilde{\alpha}^1\cdots\tilde{\alpha}^{H_V(d)}}(G):=\det\left(D^{\tilde{\alpha}^l}\phi_{\ell}(\mathfrak{g})\right)_{1\leq l,\hspace{0.2mm}\ell\leq H_V(d)}$. Besides, for every subset $\mathcal{T}\subseteq\left\{1,2,\ldots,q\right\}$ with $\#\mathcal{T}=\operatorname{rank}\left\{Q_j\right\}_{j\in\mathcal{T}}=n+1$, use the hypersurfaces in Lemma \ref{l2.2} to define $G_{\mathcal{T}}$ similarly, and there is a constant $\tilde{C}_{\mathcal{T}}\neq0$ such that $W_{\tilde{\alpha}^1\cdots\tilde{\alpha}^{H_V(d)}}(G_{\mathcal{T}})=\tilde{C}_{\mathcal{T}}\hspace{0.2mm}W_{\tilde{\alpha}^1\cdots\tilde{\alpha}^{H_V(d)}}(G)$.
Recall $\varphi=\frac{W_{\alpha^1\cdots\alpha^{H_V(d)}}(F)}{Q_1^{\omega_1}(\mathfrak{f})\cdots Q_q^{\omega_q}(\mathfrak{f})}$ and $\aleph(F_{\mathcal{T}})=\frac{W_{\alpha^1\cdots\alpha^{H_V(d)}}(F_{\mathcal{T}})}
{\prod_{j\in\mathcal{T}}Q_j(\mathfrak{f})\prod_{i=1}^{H_V(d)-n-1}Q^*_i(\mathfrak{f})}$.
Analogously, set $\tilde{\varphi}:=\frac{W_{\tilde{\alpha}^1\cdots\tilde{\alpha}^{H_V(d)}}(G)}{Q_1^{\omega_1}(\mathfrak{g})\cdots Q_q^{\omega_q}(\mathfrak{g})}$ and $\tilde{\aleph}(G_{\mathcal{T}}):=\frac{W_{\tilde{\alpha}^1\cdots\tilde{\alpha}^{H_V(d)}}(G_{\mathcal{T}})} {\prod_{j\in\mathcal{T}}Q_j(\mathfrak{g})\prod_{i=1}^{H_V(d)-n-1}Q^*_i(\mathfrak{g})}$.
Then, one has \eqref{e3.7} and
\begin{equation}\label{e4.2}
\nm{\mathfrak{g}(z)}^{d\left\{\sum_{j=1}^q\omega_j-H_V(d)\right\}}\n{\tilde{\varphi}(z)}
\leq K\sum_{\mathcal{R},\mathcal{T}}\n{\tilde{\aleph}(G_{\mathcal{T}})(z)}\hspace{6mm}\forall\hspace{2mm}z\in M\setminus\mathtt{I}_g.
\end{equation}

Now, it is routine to see our condition \eqref{e4.1} and \eqref{e2.5} imply that
\begin{equation}\label{e4.3}
\sum\limits_{j=1}^q\omega_j>H_V(d)+\frac{2\hspace{0.2mm}\varpi}{d}\left(H_V(d)-1\right)+\frac{\rho}{d}\,H_V(d)\left(H_V(d)-1\right).
\end{equation}

From our hypotheses, we know $\chi(z)=0$ for all $z\in\bigcup\limits_{j=1}^qf^{-1}(D_j)$.
As $\omega_j\leq\varpi$, we can infer that $\nu_\varphi^\infty,\nu_{\tilde{\varphi}}^\infty\leq\varpi\left(H_V(d)-1\right)\nu_\chi^0$ and thus $\varphi\hspace{0.2mm}\chi^{\varpi\left(H_V(d)-1\right)},\tilde{\varphi}\hspace{0.2mm}\chi^{\varpi\left(H_V(d)-1\right)}$ are both holomorphic functions on $B(1)$.
Recall the K\"{a}hler form $\boldsymbol{\omega}=\frac{\sqrt{-1}}{2}\sum_{i,j}h_{i\bar{j}}\,dz_i\wedge d{\bar{z}_j}$ on $B(1)$.
By assumption, there exist two continuous, pluri-subharmonic functions $\tau_1,\tau_2\not\equiv-\infty$ such that
\begin{equation*}
e^{\tau_1}\sqrt{\det(h_{i\bar{j}})}\leq\nm{\mathfrak{f}}^\rho\hspace{4mm}\mathrm{and}\hspace{4mm}e^{\tau_2}\sqrt{\det(h_{i\bar{j}})}\leq\nm{\mathfrak{g}}^\rho.
\end{equation*}
Take $\tau:=\log\n{z^{\boldsymbol{\alpha}+\tilde{\boldsymbol{\alpha}}}
\varphi\hspace{0.2mm}\tilde{\varphi}\hspace{0.2mm}\chi^{2\varpi\left(H_V(d)-1\right)}}^{\hat{t}_0}$ for $\hat{t}_0:=\frac{\rho}{d\left\{\sum_{j=1}^q\omega_j-H_V(d)\right\}-2\varpi\left(H_V(d)-1\right)}>0$ with $\boldsymbol{\alpha}=\sum_{l=1}^{H_V(d)}\alpha^l,\tilde{\boldsymbol{\alpha}}:=\sum_{l=1}^{H_V(d)}\tilde{\alpha}^l\in\mathbb{Z}^m_{\geq0}$.
Then, $\tau$ is pluri-subharmonic and one has
\begin{equation*}
\begin{split}
&\,\det(h_{i\bar{j}})\,e^{\tau+\tau_1+\tau_2}\leq\n{z^{\boldsymbol{\alpha}}\varphi}^{\hat{t}_0}\n{z^{\tilde{\boldsymbol{\alpha}}}\tilde{\varphi}}^{\hat{t}_0}
\n{\chi}^{2\hat{t}_0\varpi\left(H_V(d)-1\right)}\nm{\mathfrak{f}}^\rho\nm{\mathfrak{g}}^\rho\\
\leq&\,K\n{z^{\boldsymbol{\alpha}}\varphi}^{\hat{t}_0}\nm{\mathfrak{f}}^{\rho+2\hat{t}_0\varpi\left(H_V(d)-1\right)}
\n{z^{\tilde{\boldsymbol{\alpha}}}\tilde{\varphi}}^{\hat{t}_0}\nm{\mathfrak{g}}^{\rho+2\hat{t}_0\varpi\left(H_V(d)-1\right)}\\
=&\,K\n{z^{\boldsymbol{\alpha}}\varphi}^{\hat{t}_0}\nm{\mathfrak{f}}^{d\hat{t}_0\left\{\sum_{j=1}^q\omega_j-H_V(d)\right\}}
\n{z^{\tilde{\boldsymbol{\alpha}}}\tilde{\varphi}}^{\hat{t}_0}\nm{\mathfrak{g}}^{d\hat{t}_0\left\{\sum_{j=1}^q\omega_j-H_V(d)\right\}}.
\end{split}
\end{equation*}

Via \eqref{e4.3}, we get $\hat{t}_0H_V(d)\left(H_V(d)-1\right)<\hat{\varsigma}<1$ for some constant $\hat{\varsigma}>0$.
So, seeing $dV=c_m\det(h_{i\overline{j}})\,\upsilon_m$ for an absolute constant $c_m>0$, \eqref{e3.4}, \eqref{e3.7} and \eqref{e4.2}, we have
\begin{equation}\label{e4.4}
\begin{split}
\int_Me^{\tau+\tau_1+\tau_2}\hspace{0.2mm}dV
\leq&\,K\left(\int_M\n{z^{\boldsymbol{\alpha}}}^{2\hat{t}_0}
\n{\varphi(z)\nm{\mathfrak{f}(z)}^{d\left\{\sum_{j=1}^q\omega_j-H_V(d)\right\}}}^{2\hat{t}_0}\upsilon_m\right)^{\frac{1}{2}}\\
&\times\left(\int_M\n{z^{\tilde{\boldsymbol{\alpha}}}}^{2\hat{t}_0}
\n{\tilde{\varphi}(z)\nm{\mathfrak{g}(z)}^{d\left\{\sum_{j=1}^q\omega_j-H_V(d)\right\}}}^{2\hat{t}_0}\upsilon_m\right)^{\frac{1}{2}}\\
\leq&\,K\left\{\sum_{\mathcal{R},\mathcal{T}}
\int_0^1r^{2m-1}\left(\int_{S(r)}\n{z^{\boldsymbol{\alpha}}\aleph(F_{\mathcal{T}})(z)}^{2\hat{t}_0}\sigma_m\right)dr\right\}^{\frac{1}{2}}\\
&\times\left\{\sum_{\mathcal{R},\mathcal{T}}
\int_0^1r^{2m-1}\left(\int_{S(r)}\n{z^{\tilde{\boldsymbol{\alpha}}}\tilde{\aleph}(G_{\mathcal{T}})(z)}^{2\hat{t}_0}\sigma_m\right)dr\right\}^{\frac{1}{2}}\\
\leq&\,K\int_0^1\frac{1}{\left(1-r\right)^{\hat{\varsigma}}}\left(\log\frac{1}{1-r}\right)^{2\hat{\varsigma}}dr
=\frac{K}{\left(1-\hat{\varsigma}\right)^{2\hat{\varsigma}+1}}\,\Gamma(2\hat{\varsigma}+1)<\infty
\end{split}
\end{equation}
by H\"{o}lder's inequality, where a parallel argument concerning \eqref{e3.11} and \eqref{e3.12} is used to derive \eqref{e4.4}.
This contradicts the results of Yau \cite{Yst} and Karp \cite{Ka}, and thus $f\equiv g$.
\end{proof}

Finally, we describe a uniqueness result when the growth {\bf condition $\mathbf{C}(\rho)$} is dropped.
Since it follows directly from the discussions in \cite[Section 4]{Fu4} (see also \cite[Section 3]{RS2} or \cite[Theorem 4.2]{Ya}) and our Proposition \ref{p3.2} (in particular \eqref{e3.14}), we only outline its proof.

\begin{prop}\label{p4.2}
Under the same hypotheses of Theorem \ref{t4.1} concerning the algebraic variety $V$ in $\mathbb{P}^N(\mathbb{C})$ and the hypersurfaces $D_1,D_2,\ldots,D_q$ in $\mathbb{P}^N(\mathbb{C})$, suppose $f,g:B(R_0)\left(\subseteq\mathbb{C}^m\right)\to V$ are algebraically non-degenerate meromorphic maps satisfying the conditions {\bf(1.)\hspace{0.2mm}-\hspace{0.2mm}(3.)}.
Fix $d$ the least common multiple of $d_1,d_2,\ldots,d_q$.
Then, one has $f\equiv g$ provided either
\begin{equation}\label{e4.5}
q>\frac{2k-n+1}{n+1}H_V(d)+\frac{2}{d}\left(H_V(d)-1\right)
\end{equation}
when $R_0=\infty$ or
\begin{equation}\label{e4.6}
q>\frac{2k-n+1}{n+1}\left\{H_V(d)+\frac{\lambda}{d}\,H_V(d)\left(H_V(d)-1\right)\right\}+\frac{2}{d}\left(H_V(d)-1\right)
\end{equation}
when $R_0=1$ with $\lambda:=\liminf\limits_{r\to1}\frac{\log\frac{1}{1-r}}{T_f(r,r_0)+T_g(r,r_0)}$ outside a set of finite logarithmic measure.
\end{prop}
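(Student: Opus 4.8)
The plan is to argue by contradiction, reusing verbatim the opening of the proof of Theorem~\ref{t4.1} but replacing its volume/Yau--Karp step by the second main theorem estimate of Proposition~\ref{p3.2}. Assuming $f\not\equiv g$, I would first produce indices $0\le\imath\neq\breve\imath\le N$ for which $\chi:=\mathtt{f}_\imath\mathtt{g}_{\breve\imath}-\mathtt{f}_{\breve\imath}\mathtt{g}_\imath\not\equiv0$ is holomorphic on $M$ with $\n{\chi}\le2\nm{\mathfrak f}\nm{\mathfrak g}$. The role of hypotheses {\bf(1.)--(3.)} is exactly as in Theorem~\ref{t4.1}: by {\bf(2.)} the representations $\mathfrak f,\mathfrak g$ are proportional on $\bigcup_{j=1}^qf^{-1}(D_j)$, so $\chi$ vanishes there, and by {\bf(3.)} the sets $f^{-1}(D_j)$ are pairwise disjoint off an analytic set of codimension at least $2$; hence
\begin{equation*}
\nu_\chi^0\ge\sum_{j=1}^q\min\bre{\nu^f_{D_j},1}
\end{equation*}
outside that exceptional set.

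Next I would extract two competing estimates. On one side, $\n{\chi}\le2\nm{\mathfrak f}\nm{\mathfrak g}$ together with Jensen's formula (and the first main theorem applied to the holomorphic $\chi$) gives the counting bound
\begin{equation*}
\sum_{j=1}^qN_f^1(r,r_0;D_j)\le N_\chi(r,r_0)\le T_f(r,r_0)+T_g(r,r_0)+O\pr{1},
\end{equation*}
and the same holds with $g$ in place of $f$ since $f^{-1}(D_j)=g^{-1}(D_j)$ by {\bf(1.)}. On the other side, I would \emph{not} quote the already-simplified inequality \eqref{e3.13}, but rather the sharper intermediate estimate occurring inside the proof of Proposition~\ref{p3.2}, namely
\begin{equation*}
\bre{\varpi\pr{q-2k+n-1}+n+1-H_V(d)}T_f(r,r_0)\le\sum_{j=1}^q\frac{\omega_j}{d}N_f^{H_V(d)-1}(r,r_0;D_j)+S_f(r,r_0).
\end{equation*}
Using $N_f^{H_V(d)-1}(r,r_0;D_j)\le\pr{H_V(d)-1}N_f^1(r,r_0;D_j)$ and $\omega_j\le\varpi$ from \eqref{e2.5}, the right-hand counting sum is at most $\tfrac{\varpi\pr{H_V(d)-1}}{d}\pr{T_f+T_g}+O\pr{1}$, and likewise for $g$.

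Adding the $f$- and $g$-inequalities and invoking $\tfrac1\varpi\le\tfrac{2k-n+1}{n+1}$ from \eqref{e2.5} (together with $H_V(d)\ge n+1$) would then collapse everything, after dividing by the fixed constant $\varpi$, to
\begin{equation*}
\bre{q-\frac{2k-n+1}{n+1}H_V(d)-\frac2d\pr{H_V(d)-1}}\pr{T_f(r,r_0)+T_g(r,r_0)}\le S_f(r,r_0)+S_g(r,r_0)+O\pr{1}.
\end{equation*}
When $R_0=\infty$ the error terms $S_f,S_g$ are $O\pr{\log^+\pr{T_f+T_g}+\log^+r}$ off a set of finite linear measure, so hypothesis \eqref{e4.5} makes the bracket strictly positive and forces $T_f+T_g$ to remain bounded, which is impossible for algebraically non-degenerate transcendental maps; thus $f\equiv g$. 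When $R_0=1$, I would instead feed in the logarithmic part of \eqref{e3.14}, which for the pair contributes $\tfrac{2k-n+1}{d\pr{n+1}}H_V(d)\pr{H_V(d)-1}\log^+\tfrac1{1-r}$; transferring it to the left, dividing through by $T_f+T_g$, and passing to the $\liminf$ as $r\to1$ defining $\lambda$ turns the bracket condition into exactly \eqref{e4.6}, again a contradiction.

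The main obstacle I foresee is not conceptual but a delicate piece of bookkeeping: reproducing the constant $\tfrac2d\pr{H_V(d)-1}$ \emph{exactly} forces one to work with the refined form of Proposition~\ref{p3.2} (carrying $\omega_j/d$ and the Nochka bound $\omega_j\le\varpi$, rather than the coarser \eqref{e3.13}, which would only yield the weaker requirement $q>\tfrac{2k-n+1}{n+1}H_V(d)+2\pr{H_V(d)-1}$); and in the case $R_0=1$ one must run the final limiting step along a sequence $r\to1$ that simultaneously realizes the $\liminf$ defining $\lambda$ and stays outside the exceptional set of finite logarithmic measure, using the monotonicity of $T_f+T_g$ to justify that the exceptional set does not affect the $\liminf$.
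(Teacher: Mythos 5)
Your proposal is correct and follows essentially the same route as the paper: the paper's proof likewise assumes $f\not\equiv g$, forms $\chi$, bounds $N\left(r,r_0;\frac{1}{\chi}\right)\leq T_f(r,r_0)+T_g(r,r_0)+O\left(1\right)$ by the first main theorem, and --- exactly as you anticipated --- invokes ``the derivation of \eqref{e3.13}'' (i.e., the intermediate inequality carrying the Nochka weights $\omega_j/d$, divided through by $\varpi$) rather than \eqref{e3.13} itself, which is precisely what produces the constant $\frac{2}{d}\left(H_V(d)-1\right)$. The only loose end is your $R_0=\infty$ endgame: since $S_f+S_g=O\left(\log^+\left(T_f+T_g\right)+\log^+r\right)$, your inequality forces $T_f+T_g=O\left(\log r\right)$ rather than boundedness, which contradicts transcendence but says nothing for rational $f,g$; in that remaining case one takes $S_f=S_g=O\left(1\right)$ (as the paper notes in a footnote) and the same contradiction with \eqref{e4.5} follows.
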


\begin{proof}
From the derivation of \eqref{e3.13} and the facts that $\chi=0$ on $\bigcup\limits_{j=1}^qf^{-1}(D_j)$ and $T_\chi(r,r_0)\leq T_f(r,r_0)+T_g(r,r_0)$, one has for the valence function $N\left(r,r_0;\frac{1}{\chi}\right)$ of zeros of $\chi$
\begin{equation*}
\begin{split}
&\left\{q-\frac{2k-n+1}{n+1}H_V(d)\right\}\left\{T_f(r,r_0)+T_g(r,r_0)\right\}\\
\leq\,&\frac{2}{d}\left(H_V(d)-1\right)N\left(r,r_0;\frac{1}{\chi}\right)+S_f(r,r_0)+S_g(r,r_0)
\end{split}
\end{equation*}
when we suppose $f\not\equiv g$; that is, considering the first main theorem,
\begin{equation*}
q\leq\frac{2k-n+1}{n+1}H_V(d)+\frac{2}{d}\left(H_V(d)-1\right)+\liminf\limits_{r\to R_0}\frac{S_f(r,r_0)+S_g(r,r_0)}{T_f(r,r_0)+T_g(r,r_0)}.
\end{equation*}
If $R_0=\infty$, a contradiction against \eqref{e4.5} follows\footnote{Recall when $f,g$ are rational, then $S_f(r,r_0)=S_g(r,r_0)=O\left(1\right)$.}; on the other hand, if $R_0=1$, \eqref{e3.14} yields a contradiction against \eqref{e4.6} as $\liminf\limits_{r\to1}\frac{S_f(r,r_0)+S_g(r,r_0)}{T_f(r,r_0)+T_g(r,r_0)}\leq\frac{\lambda\left(2k-n+1\right)}{d\left(n+1\right)}H_V(d)\left(H_V(d)-1\right)$.
\end{proof}


\bibliographystyle{amsplain}

\begin{thebibliography}{21}
%
\bibitem{AQT} D.P. An, S.D. Quang \and D.D. Thai. The second main theorem for meromorphic mappings into a complex projective space.
 {\em Acta Math. Vietnam.} {\bf38} (2013), 187-205.

%
\bibitem{CRY} Z. Chen, M. Ru \and Q. Yan. The degenerated second main theorem and Schmidt's subspace theorem.
 {\em Sci. China Math.} {\bf55} (2012), 1367-1380.

%
\bibitem{DTT} G. Dethloff, T.V. Tan \and D.D. Thai. An extension of the Cartan-Nochka second main theorem for hypersurfaces.
 {\em Internat. J. Math.} {\bf22} (2011), 863-885.

%
\bibitem{Fu1} H. Fujimoto. On the Gauss map of a complete minimal surface in $\mathbf{R}^m$.
 {\em J. Math. Soc. Japan} {\bf35} (1983), 279-288.

%
\bibitem{Fu2} H. Fujimoto. Value distribution of the Gauss maps of complete minimal surfaces in $\mathbf{R}^m$.
 {\em J. Math. Soc. Japan} {\bf35} (1983), 663-681.

%
\bibitem{Fu3} H. Fujimoto. Nonintegrated defect relation for meromorphic maps of complete K\"{a}hler manifolds into
$P^{N_1}(\mathbf{C})\times\cdots\times P^{N_k}(\mathbf{C})$.
 {\em Japan. J. Math. (N.S.)} {\bf11} (1985), 233-264.

%
\bibitem{Fu4} H. Fujimoto. A unicity theorem for meromorphic maps of a complete K\"{a}hler manifold into $P^N(\mathbf{C})$.
 {\em Tohoku Math. J.} {\bf38} (1986), 327-341.

%
\bibitem{Fu5} H. Fujimoto. {\sf Value distribution theory of the Gauss map of minimal surfaces in $\mathbf{R}^m$}. Friedr. Vieweg \& Sohn, Braunschweig (1993).

%
\bibitem{Ha1} Q. Han. A defect relation for meromorphic maps on generalized $p$-parabolic manifolds intersecting hypersurfaces in complex projective algebraic varieties.
 {\em Proc. Edinburgh Math. Soc. (2)} {\bf56} (2013), 551-574.

%
\bibitem{Ha2} Q. Han. A hypersurface defect relation for a family of meromorphic maps on a generalized $p$-parabolic manifold.
 {\em Colloq. Math.} {\bf139} (2015), 95-110.

%
\bibitem{Hwk} W.K. Hayman. {\sf Meromorphic functions}. Clarendon Press, Oxford (1964).

%
\bibitem{Ka} L. Karp. Subharmonic functions on real and complex manifolds.
 {\em Math. Z.} {\bf179} (1982), 535-554.

%
\bibitem{No1} E.I. Nochka. On the theory of meromorphic curves.
 {\em Soviet Math. Dokl.} {\bf 27} (1983), 377-381.

%
\bibitem{No2} E.I. Nochka. A lemma on weights.
 {\em Math. Notes} {\bf 78} (2005), 105-113.

%
\bibitem{Nj} J. Noguchi. A note on entire pseudo-holomorphic curves and the proof of Cartan-Nochka's theorem.
 {\em Kodai Math. J.} {\bf28} (2005), 336-346.

%
\bibitem{QA} S.D. Quang and D.P. An. Second main theorem and unicity of meromorphic mappings for hypersurfaces in projective varieties.
 {\em Preprint}.\hspace{2mm}{\tt https://arxiv.org/pdf/1412.1195v3.pdf}

%
\bibitem{RS1} M. Ru \and S. Sogome. Non-integrated defect relation for meromorphic maps of complete K\"{a}hler manifolds into $\mathbb{P}^n(\mathbb{C})$ intersecting hypersurfaces.
 {\em Trans. Amer. Math. Soc.} {\bf364} (2012), 1145-1162.

%
\bibitem{RS2} M. Ru \and S. Sogome. A unicity theorem for meromorphic maps of a complete K\"{a}hler manifold into $\mathbb{P}^n(\mathbb{C})$ sharing hypersurfaces.
 {\em Proc. Amer. Math. Soc.} {\bf141} (2013), 4229-4239.

%
\bibitem{TT} T.V. Tan \and V.V. Truong. A non-integrated defect relation for meromorphic maps of complete K\"{a}hler manifolds into a projective variety intersecting hypersurfaces.
 {\em Bull. Sci. Math.} {\bf136} (2012), 111-126.

%
\bibitem{Ya} Q. Yan. A non-integrated defect relation and the uniqueness theorem for meromorphic maps of a complete K\"{a}hler manifold into $\mathbb{P}^n(\mathbb{C})$.
 {\em J. Math. Anal. Appl.} {\bf398} (2013), 567-581.

%
\bibitem{Yst} S.T. Yau. Some function-theoretic properties of complete Riemannian manifold and their applications to geometry.
 {\em Indiana Univ. Math. J.} {\bf25} (1976), 659-670 \& {\bf31} (1982), 607.
\end{thebibliography}

\end{document}